\documentclass[11pt, reqno]{amsart}
\usepackage[dvipsnames,usenames]{color}
\usepackage{hyperref}
\usepackage{graphicx}
\usepackage{epsfig}
\usepackage[latin1]{inputenc}
\usepackage{amsmath}
\usepackage{amsfonts}
\usepackage{amssymb}
\usepackage{amsthm}
\usepackage{amscd}
\usepackage{verbatim}
\usepackage{subfigure}
\usepackage{caption}
\usepackage{pinlabel}
\usepackage{stmaryrd}
\usepackage{enumerate, enumitem}
\usepackage{todonotes}
\usepackage{bm}
\usepackage{thmtools}
\usepackage{thm-restate}
\usepackage{lipsum}
\usepackage{setspace}
\usepackage{mathtools}
\usepackage[all]{xypic}
\usepackage[abs]{overpic}
\usepackage{color}
\usepackage[normalem]{ulem}

\allowdisplaybreaks

\usepackage{tikz}
\usetikzlibrary{arrows}
\usetikzlibrary{decorations.pathreplacing}
\usepackage{verbatim}
\usetikzlibrary{cd}
\tikzset{taar/.style={double, double equal sign distance, -implies}}
\tikzset{amar/.style={->, dotted}}
\tikzset{dmar/.style={->, dashed}}
\tikzset{aar/.style={->, very thick}}

    \oddsidemargin  0.0in
    \evensidemargin 0.0in
    \textwidth      6.5in
    \headheight     0.0in
    \topmargin      0.0in
    \textheight=9in

\newtheorem{theorem}{Theorem}[section]

\newtheorem{lemma}[theorem]{Lemma}
\newtheorem{proposition}[theorem]{Proposition}

\newtheorem{corollary}[theorem]{Corollary}

\newtheorem{question}[theorem]{Question}

\theoremstyle{definition}
\newtheorem{definition}[theorem]{Definition}

\theoremstyle{remark}
\newtheorem{remark}[theorem]{Remark}
\newtheorem{example}[theorem]{Example}

\newcommand{\alphas}{\boldsymbol{\alpha}}
\newcommand{\betas}{\boldsymbol{\beta}}

\def\F{\mathbb{F}}
\def\N{\mathbb{N}}

\def\Z{\mathbb{Z}}

\def\bbT{\mathbb{T}}


\def\cC{\mathcal{C}}

\def\cH{\mathcal{H}}

\def\cR{\mathcal R}

\def\Sym{\mathrm{Sym}}



\def \gr {\operatorname{gr}}

\def\d{\partial}

\def\co{\colon}

\def\id{\textup{id}}

\def\im{\operatorname{im}}

\def\conn{\textup{conn}}


\newcommand{\sunderline}[1]{\underline{#1\mkern-3mu}\mkern3mu }

\def\Vl {\sunderline{V}}

\def\HF {\mathit{HF}}

\newcommand \HFm {\HF^-}

\def\CFK{\mathit{CFK}}

\newcommand\HFKm{\mathit{HFK}^-}
\newcommand\HFKhat{\widehat{\mathit{HFK}}}

\def\Tor{\operatorname{Tor}}






\newcommand{\U}{\mathcal{U}}
\newcommand{\V}{\mathcal{V}}

\newcommand{\lab}[1]{$\scriptstyle #1$}

\newcommand{\skewsimeq}{\mathrel{\rotatebox[origin=c]{-180}{$\simeq$}}}

\newcommand{\bfx}{\mathbf{x}}
\newcommand{\bfy}{\mathbf{y}}
\newcommand{\bfz}{\mathbf{z}}

\newcommand{\UVring}{\F[\U, \V]}

\newcommand{\HFKmconn}{\HFKm_{\ikconn}}
\newcommand{\ikconn}{\iota_K\textup{-}\conn}
\newcommand{\Ord}{\operatorname{Ord}}

\author[J.\ Hom]{Jennifer Hom}
\thanks{The first author was partially supported by NSF grants DMS-1552285 and DMS-2104144.}
\address {School of Mathematics, Georgia Institute of Technology, Atlanta, GA 30332}
\email{hom@math.gatech.edu}

\author[S.\ Kang]{Sungkyung Kang}
\thanks {The second author was supported by the Institute for Basic Science (IBS-R003-D1).}
\address{Center for Geometry and Physics, Institute for Basic Science (IBS), Pohang 37673, Korea}
\email{sungkyung38@icloud.com}

\author[J.\ Park]{JungHwan Park}
\address{Department of Mathematical Sciences, KAIST, Daejeon, South Korea}
\email{jungpark0817@kaist.ac.kr}

\author[M.\ Stoffregen]{Matthew Stoffregen}
\thanks{The fourth author was partially supported by NSF grant DMS-1702532.}
\address {Department of Mathematics, Michigan State University, East Lansing, MI 48824}
\email{stoffre1@msu.edu}

\numberwithin{equation}{section}

\title{Linear independence of rationally slice knots}

\begin{document}

\begin{abstract} 
A knot in $S^3$ is rationally slice if it bounds a disk in a rational homology ball. We give an infinite family of rationally slice knots that are linearly independent in the knot concordance group. In particular, our examples are all infinite order. All previously known examples of rationally slice knots were order two.
\end{abstract}

\maketitle

\section{Introduction}
The \emph{knot concordance group}, denoted by $\mathcal{C}$, consists of knots in $S^3$ modulo smooth concordance. It is well known that the connected sum operation endows $\mathcal{C}$ with the structure of an abelian group, and the identity is the equivalence class of \emph{slice knots}, that is, knots which bound smoothly embedded disks in $B^4$. 

If one only requires the concordance to be smoothly embedded in a rational homology cobordism between two $3$-spheres, then we obtain the \emph{rational concordance group}, denoted by $\mathcal{C}_\mathbb{Q}$. Similarly, a knot is called a \emph{rationally slice knot} if it represents the identity in $\mathcal{C}_\mathbb{Q}$, or equivalently, if it bounds a smoothly embedded disk in a rational homology ball. Note that two concordant knots are rationally concordant. Hence we obtain the following natural surjective map:
$$\psi \co \mathcal{C} \to \mathcal{C}_\mathbb{Q}.$$

Cochran, based on work of Fintushel-Stern~\cite{Fintushel-Stern:1984-1}, showed that the figure-eight knot is rationally slice. This implies that $\ker\psi \geq \Z/2\Z$ since the figure-eight knot is negative-amphichiral and not slice.  Moreover, Cha~\cite[Theorem 4.16]{Cha:2007-1} extended this result by showing that $$\ker \psi\geq (\Z/2\Z)^\infty.$$ It is natural to ask if $\ker \psi$ contains an infinite order element (see e.g.\ \cite[Problem 1.11]{GeorgiaConference}). In this article, we answer this question by using the involutive knot Floer package of Hendricks-Manolescu \cite{HendricksManolescu}.
\begin{theorem}\label{thm:1}
The group $\ker \psi$ contains a subgroup isomorphic to $\mathbb{Z}^\infty$.
\end{theorem}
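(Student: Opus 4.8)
The plan is to exhibit an explicit infinite family $\{K_n\}_{n\ge 1}$ of rationally slice knots and to prove, using the involutive package, that the classes $[K_n]$ are linearly independent in $\cC$; since each $K_n$ lies in $\ker\psi$, this produces an embedding $\Z^\infty\hookrightarrow\ker\psi$. The input for constructing the family is Cochran's result (via Fintushel--Stern) that the figure-eight knot $4_1$ is rationally slice. Because $4_1$ is negative amphichiral it is $2$-torsion in $\cC$, so it cannot by itself yield infinite-order elements; instead one should pass to suitable (iterated) cables $K_n$ of $4_1$---say the $(2n,1)$-cables, or iterated $(2,1)$-cables---which are generically not amphichiral. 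The first step is therefore a geometric lemma asserting that the chosen cabling operation sends rationally slice knots to rationally slice knots: if $K$ bounds a disk $D$ in a rational homology ball $B$ with $\partial B=S^3$, then the cabled knot bounds a disk in a new rational homology ball, obtained either by excising $\nu(D)$ and regluing a homologically trivial cable space, or, more concretely, by realizing the $(p,1)$-cable as a fusion of parallel $0$-framed pushoffs of $K$ together with a meridian and banding the corresponding disks in $B$ into one. The only subtlety is the homology computation, to ensure the resulting $4$-manifold is again a rational homology ball bounding $S^3$; granting this, every $K_n\in\ker\psi$.

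\emph{Linear independence.} The ordinary theory is powerless here: a rationally slice knot has vanishing $\tau$, $\epsilon$, $\Upsilon$, $V_i$, $\nu^+$, and in fact locally trivial $\CFKi$, since the rational homology cobordism supplies the requisite local maps. The point of involutive Floer homology is that these cobordism maps need not intertwine the conjugation symmetry $\iota_K$, so the $\iota_K$-local equivalence class of $(\CFKi(K_n),\iota_{K_n})$ can be nontrivial---and, as we must show, of infinite order. Accordingly I would (i) compute $\CFKi(K_n)$ together with a homotopy representative of $\iota_{K_n}$, most efficiently through the immersed-curve reformulation, in which cabling is an explicit covering/straightening operation on the curve and the symmetry realizing $\iota_K$ is visible as a geometric rotation; (ii) read off the elements $[(\CFKi(K_n),\iota_{K_n})]$ in the group of $\iota_K$-local equivalence classes (or in its totally computable ``almost local equivalence'' quotient); and (iii) feed these into a family of homomorphisms out of that group---$\phi_j$-type invariants and companions---to certify that no nontrivial integer combination $\#_n (a_nK_n)$ is slice. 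Linear independence in $\cC$ together with $K_n\in\ker\psi$ then gives $\Z^\infty\le\ker\psi$.

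\emph{Main obstacle.} The crux is steps (i)--(ii): producing, uniformly in $n$, the filtered complex together with its involution, and then controlling the images in the local equivalence group finely enough to separate infinitely many of them. I expect the immersed-curve viewpoint to make both the cable formula and the conjugation symmetry tractable, reducing the separation of the $[K_n]$ to a bookkeeping computation with the standard local-equivalence homomorphisms. By contrast, verifying that the cabling preserves rational sliceness is comparatively routine, and it is the one place where ``rationally slice'' rather than ``slice'' is genuinely exploited.
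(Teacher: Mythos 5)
Your overall strategy coincides with the paper's (cables of the figure-eight knot, rational sliceness preserved by the cabling pattern, linear independence via the involutive knot Floer package and immersed-curve computations of the underlying complexes), but there are two concrete gaps in the plan as written. First, your proposed family is wrong: the $(2n,1)$-cables (and iterated $(2,1)$-cables) have \emph{even} winding number, and for the figure-eight knot an even cabling parameter yields a \emph{trivial} almost local equivalence class, so the involutive machinery you intend to use detects nothing for that family. (Since $4_1$ is amphichiral, the $(2n,1)$-cable is the mirror of the $(2n,-1)$-cable, and mirroring preserves triviality of the class.) The paper must take the $(2n-1,-1)$-cables precisely because the odd parameter produces the asymmetry $\iota_K(b)=b+a \mod (\U,\V)$ on the distinguished generator of the square-free summand; this single identity, extracted from the formal properties of $\iota_K$ (skew-equivariance, skew-grading, $\iota_K^2\simeq 1+\Psi\Phi$) rather than from any visible symmetry of the immersed curve, is what powers the whole independence argument. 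Your step (i), which assumes $\iota_{K_n}$ can be read off as a geometric rotation of the curve, is not established technology; the paper only ever obtains a partial computation of $\iota_K$ and shows that this suffices.

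Second, step (iii) does not go through as stated. The $\varphi_j$-type concordance homomorphisms you invoke are exactly among the invariants that vanish identically on rationally slice knots (the paper lists them for this reason), and there is no available family of homomorphisms on the group of almost $\iota_K$-complexes that separates these classes. The paper instead argues directly: using $f(a\otimes\cdots\otimes a)=a\otimes\cdots\otimes a$, the identity $\omega=1+\iota_K$ applied to $b\otimes a\otimes\cdots\otimes a$, and gradings, any almost local map $f$ from the larger tensor product would have to hit a generator of ``$b$-type,'' and then reducing $\d$ modulo $(\U^{m_1},\V^{m_1},\U\V)$ with $m_1>n_i$ contradicts $f$ being a chain map. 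Without either this hands-on obstruction or some substitute for the missing homomorphisms, your outline does not yet certify linear independence, even after correcting the cabling parameters.
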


As mentioned above, if a non-slice knot is concordant to a negative-amphichiral knot, then the knot represents an order two element in $\mathcal{C}$ (in fact, the converse of this statement was asked by Gordon~\cite[Problem 16]{Hausmann:1978-1}, and to the best of the authors' knowledge, remains open). 

Moreover, a knot $K$ is called \emph{strongly negative-amphichiral} if there is an orientation-reversing involution $\tau\colon S^3 \to S^3$ such that $\tau(K) = K$. If $\tau$ is a orientation-reversing involution on $S^3$, then by Smith theory the fixed point set of $\tau$ is either $S^2$ or $S^0$. For the former case, the knot is isotopic to $J\#-J$ for some knot $J$ where $-J$ is the reverse of the mirror image of $J$. In particular, it is slice. For the case where the fixed point set of $\tau$ is $S^0$, Kawauchi~\cite[Section 2]{kawauchi} showed that the knot is rationally slice. In conclusion, if a knot is concordant to a strongly negative-amphichiral knot, then the knot is rationally slice. Hence Theorem~\ref{thm:1} can be interpreted as that the converse of this statement is far from being true.


Also, recall that Levine~\cite{Levine:1969-2, Levine:1969-1} constructed a surjective homomorphism
$$\phi \co \mathcal{C} \to \mathcal{AC} \cong \mathbb{Z}^\infty \oplus (\Z/2\Z)^\infty \oplus (\Z/4\Z)^\infty,$$
where $\mathcal{AC}$ is the \emph{algebraic concordance group}. He proved that the corresponding homomorphism is an isomorphism for the higher odd-dimensional knot concordance group (recall that the higher even-dimensional knot concordance group is trivial~\cite{Kervaire:1965-1}). Moreover, he showed that if a knot has vanishing Levine-Tristram signature function, then the knot represents a torsion element in $\mathcal{AC}$~\cite[Section 22]{Levine:1969-2}, and Cha-Ko~\cite[Theorem 1.1]{Cha-Ko:2002-1} showed that the Levine-Tristram signature function vanishes for rationally slice knots. Hence every higher dimensional rationally slice knot represents a finite order element in the knot concordance group. In particular, Theorem~\ref{thm:1} illustrates a significant difference between the classical knot concordance group and the higher dimensional knot concordance group. Recall that this fact was first proved by Casson-Gordon~\cite{Casson-Gordon:1978-1, Casson-Gordon:1986-1}, where they showed that the Levine's homomorphism is not an isomorphism in the classical dimension. We summarize the above discussion as follows.

\begin{corollary}\label{cor:algslicerational}There is a family of rationally slice knots which generates a $\mathbb{Z}^\infty$-subgroup of $\ker \phi$.\end{corollary}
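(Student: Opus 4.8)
The plan is to deduce Corollary~\ref{cor:algslicerational} directly from Theorem~\ref{thm:1} together with the classical facts recalled in the introduction. First I would fix a family $\{K_i\}_{i=1}^\infty$ of rationally slice knots whose concordance classes generate a $\mathbb{Z}^\infty$ subgroup of $\ker\psi \subseteq \mathcal{C}$; such a family exists by Theorem~\ref{thm:1}. Since each $K_i$ is rationally slice, the theorem of Cha--Ko~\cite[Theorem 1.1]{Cha-Ko:2002-1} shows that its Levine--Tristram signature function vanishes identically, and then Levine's criterion~\cite[Section 22]{Levine:1969-2} shows that $\phi(K_i)$ is a torsion element of $\mathcal{AC}$.

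The next observation is that one cannot in general expect the $K_i$ themselves to be algebraically slice --- for instance the figure-eight knot is rationally slice but represents a nonzero (order two) class in $\mathcal{AC}$ --- so I would pass to multiples. Using the explicit description $\mathcal{AC}\cong \mathbb{Z}^\infty \oplus (\Z/2\Z)^\infty \oplus (\Z/4\Z)^\infty$, every torsion element of $\mathcal{AC}$ is annihilated by $4$; hence $\phi(4K_i) = 4\,\phi(K_i) = 0$, i.e.\ $4K_i \in \ker\phi$ for every $i$. Each $4K_i$ is again rationally slice because $\ker\psi$ is a subgroup of $\mathcal{C}$, and connected sums of rationally slice knots are rationally slice.

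It remains to check that $\{4K_i\}$ still generates a $\mathbb{Z}^\infty$ subgroup, now necessarily lying in $\ker\phi$. This is immediate: a relation $\sum_i n_i (4K_i) = 0$ in $\mathcal{C}$ becomes $\sum_i (4n_i) K_i = 0$, which forces $4n_i = 0$ and hence $n_i = 0$ for all $i$, by linear independence of the $K_i$. Therefore $\{4K_i\}$ is a family of rationally slice knots spanning a $\mathbb{Z}^\infty$ subgroup of $\ker\phi$. I expect no real obstacle here beyond recognizing that algebraic sliceness must be arranged by passing to connected-sum multiples; the genuinely hard content is entirely contained in Theorem~\ref{thm:1}.
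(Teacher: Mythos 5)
Your argument is correct and is essentially the proof the paper intends: the corollary is stated there as a summary of the preceding discussion (Theorem~\ref{thm:1} combined with Cha--Ko and Levine's torsion criterion), and your explicit passage to the multiples $4K_i$ --- needed since the cables $K_n$ themselves are not algebraically slice --- is precisely the detail the paper leaves implicit. Nothing further is required.
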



%

The proof of Theorem~\ref{thm:1} uses the involutive knot Floer package defined by Hendricks-Manolescu \cite{HendricksManolescu}. The involutive knot Floer complex is obtained by considering an action $\iota_K $ on the knot Floer complex  $\CFK_{\F[\U, \V]}$ of \cite{OSknots} and \cite{Rasmussen-thesis}. We refer to the pair $(\CFK_{\F[\U,\V]}(K), \iota_K)$ as the \emph{$\iota_K$-complex} of a knot $K$. Moreover, Zemke \cite{Zemke-funct}  (see also \cite[Theorem 1.5]{Zemke-connsuminv}) showed that up to an algebraic equivalence called \emph{local equivalence}, the $\iota_K$-complex of a knot is a concordance invariant. We use a slightly coarser equivalence relation called \emph{almost local equivalence}, which is motivated by \cite{DHSThomcobord} (see also \cite{DHSTmore}), to show the following. Note that the following theorem implies Theorem~\ref{thm:1} immediately since the $(p, -1)$-cable of a rationally slice knot is rationally slice.


\begin{theorem}\label{thm:main}
If $K_n$ is the $(2n-1, -1)$-cable of the figure-eight knot for $n \geq 2$, then the $\iota_K$-complex of a non-trivial linear combination of $K_n$ is never locally equivalent to the $\iota_K$-complex of the unknot.
\end{theorem}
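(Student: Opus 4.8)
The plan is to compute the $\iota_K$-complex of $K_n$ explicitly and then extract from it an almost-local-equivalence invariant that behaves like a (partial) homomorphism to a totally ordered group, so that no non-trivial linear combination can be locally trivial. First I would recall the knot Floer complex of the figure-eight knot $4_1$: over $\F[\U,\V]$ it is the direct sum of an acyclic ``box'' and a single generator, and its $\iota_K$-action is understood (it is the prototypical example where $\iota_K$ is non-trivial on the box summand). Then I would use Hanselman-Watson's or the bordered/immersed-curve cabling formula---or equivalently Hedden's cabling results combined with Zemke's functoriality---to compute $\CFK_{\F[\U,\V]}(K_n)$ for the $(2n-1,-1)$-cable. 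The $-1$ framing is essential: it is what makes $K_n$ rationally slice (the $(p,-1)$-cable of a rationally slice knot is rationally slice, as noted in the excerpt), so the $\U\V$-torsion part of the complex must be locally trivial, and all of the interesting content sits in an ``almost-acyclic'' summand whose structure grows in complexity with $n$.

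The key step is to pin down the induced action $\iota_{K_n}$ on this summand and to see that, after passing to the almost-local-equivalence class, $K_n$ is represented by a ``standard complex'' in the sense of Dai-Hedden-Stoffregen-Truong. Concretely I would argue that the almost local equivalence class of $(\CFK_{\F[\U,\V]}(K_n),\iota_{K_n})$ is detected by a parameter---morally the length of the shortest differential arrow connecting the two ``$\iota_K$-halves,'' which for the cable of $4_1$ should be governed by the cabling parameter $2n-1$---and that these parameters are realized by pairwise non-isomorphic standard complexes. Once each $K_n$ is identified with a distinct standard complex $C_n$, linear independence follows from the abelian group structure on almost-local-equivalence classes of standard complexes: one shows that in any non-trivial $\Z$-linear combination $\sum a_n K_n$ the standard complex with the extremal parameter (largest $n$ with $a_n\neq 0$, with a sign coming from the sign of $a_n$) cannot be cancelled, exactly as in the Upsilon-type or $\phi_j$-type arguments of \cite{DHSThomcobord, DHSTmore}, so the connected sum is not almost locally trivial, hence not locally trivial, hence (since local triviality is a concordance invariant) $\sum a_n K_n$ is not the unknot's $\iota_K$-complex.

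I expect the main obstacle to be the cabling computation together with the precise identification of $\iota_{K_n}$: cabling formulas for the full $\CFK_{\F[\U,\V]}$ complex with its involution are delicate, the $(2n-1,-1)$-cable produces a complex of rank growing with $n$, and one must carefully track basis changes to recognize the standard-complex normal form rather than merely its homotopy type. A secondary subtlety is that local equivalence and almost local equivalence differ: the theorem is phrased in terms of local equivalence, but the argument most naturally produces non-triviality of the \emph{almost} local equivalence class; since almost local equivalence is a quotient of local equivalence, non-triviality downstairs implies non-triviality upstairs, so this direction is safe, but one must be careful not to accidentally use properties that only hold for the coarser relation. Finally, one should verify that the linear-algebra/combinatorial argument producing the ``extremal parameter survives'' conclusion is genuinely additive under connected sum---i.e., that the relevant invariant of $C_m \otimes C_n$ is controlled by those of $C_m$ and $C_n$---which is where the ordered-group structure on standard complexes does the real work.
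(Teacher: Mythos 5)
Your overall architecture (cable computation via immersed curves, passing to almost local equivalence, an ``extremal parameter survives'' cancellation) matches the spirit of the paper, but the load-bearing step is a genuine gap: you invoke a classification of almost $\iota_K$-complexes by standard complexes together with a totally ordered abelian group structure, ``exactly as in'' the Dai--Hedden--Stoffregen--Truong arguments. No such normal-form theory exists for almost $\iota_K$-complexes (the DHST classification is for almost $\iota$-complexes over $\F[U]$, and the non-involutive standard-complex theory over $\F[\U,\V]/(\U\V)$ gives nothing here, since all $\varphi_j$-type invariants of $K_n$ vanish and $\CFK_{\F[\U,\V]}(K_n)$ splits as a free summand generated by $a$ plus box-like pieces, so its non-involutive local class is trivial). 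The paper deliberately avoids this: it never classifies the class of $K_n$, and explicitly does not need the group $\widehat{\mathfrak{I}}_K$. Without a substitute for the ordered-group step, your argument that the extremal summand cannot cancel in $C_m\otimes C_n$ is an assertion, not a proof; this additivity is precisely the hard point.

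Relatedly, you propose to ``pin down the induced action $\iota_{K_n}$'' in full, which is both harder than necessary and not what makes the argument work. The paper only determines $\iota_K$ modulo $(\U,\V)$ on a few generators using formal properties (skew-gradedness, skew-equivariance, $\iota_K^2\simeq 1+\Psi\Phi$): with notation as in Proposition~\ref{prop:CFKcable}, $\iota_K(a)=a$ and $\iota_K(b)=b+a$ mod $(\U,\V)$ (Lemma~\ref{lem:iotaKcable}). The linear independence then follows from a direct chain-level obstruction: if $m_1>n_1$, any almost local map $f\co\bigotimes C_{m_i}^{\otimes a_i}\to\bigotimes C_{n_i}^{\otimes b_i}$ must, because $(1+\iota_K)(b\otimes a\otimes\cdots\otimes a)=a\otimes\cdots\otimes a$ mod $(\U,\V)$, send $b\otimes a\otimes\cdots\otimes a$ to something containing a tensor product of $b$-type generators; but $\d(b\otimes a\otimes\cdots\otimes a)\equiv 0$ modulo $(\U^{m_1},\V^{m_1},\U\V)$ while the differential of every such target term is nonzero modulo that ideal (all arrows out of $b$-type generators downstairs have length less than $m_1$) and cannot be cancelled, contradicting that $f$ is a chain map. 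If you want to salvage your write-up, you should replace the appeal to standard complexes and ordered groups by an explicit obstruction of this kind (or prove the classification you are assuming, which is a substantial independent project).
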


The explicit computation is done by first computing the knot Floer complexes of the knots $K_n$ over $\F[\U, \V]/(\U\V)$ using bordered Floer homology~\cite{LOT}, interpreted in terms of immersed curves as in~\cite{HRW,HRW:2018,HW-cables}. Then we lift this computation to $\F[\U, \V]$ by using $\d^2=0$ and determine the absolute grading by using computations of ~\cite{Petkova}. Lastly, in Section~\ref{sec:involutiveof41}, we use formal properties of $\iota_K$ to obtain some partial computation of the $\iota_K$-complex. It turns out that the partial computation is enough to obtain Theorem~\ref{thm:main}.


\begin{remark}
The odd cabling parameter in Theorem \ref{thm:main} is essential for the proof, as it guarantees a certain asymmetry in the almost local equivalence class; an even cabling parameter, i.e., the $(2n, -1)$-cable for the figure-eight knot, yields a trivial almost local equivalence class.
\end{remark}

We also remark that it was previously known that any non-trivial linear combination of $K_n$ is not ribbon (in fact not homotopy ribbon), which is implicit in~\cite[Theorem 1.2]{Kim-Wu:2018-1} where they use Miyazaki's result~\cite[Theorem 8.5.1 and 8.6]{Miyazaki:1994-1} to show that any non-trivial linear combination of $(2n,1)$-cables of the figure-eight knot is not ribbon.


Many interesting questions remain open regarding $4$-dimensional properties of rationally slice knots. The reason for this is that many concordance invariants vanish for them. If $K$ is rationally slice, then it admits a concordance, lying in some rational homology $S^{3}\times I$, from $K$ to the unknot $U$. After performing a $\pm 1$ surgery along the concordance, we see that $S^{3}_{\pm 1}(K)$ is rational homology cobordant to $S^{3}_{\pm 1}(U)\cong S^3$; so if $K$ is rationally slice, then both $\pm 1$ surgeries on $K$ bound rational homology balls. Hence the $\tau$-invariant~\cite{Ozsvath-Szabo:2003-1}, $\varepsilon$-invariant~\cite{Hom:2014-1}, $\Upsilon$-invariant~\cite{Ozsvath-Stipsicz-Szabo:2017-1}, $\Upsilon^2$-invariant~\cite{Kim-Livingston:2018-1}, $\nu^+$-invariant~\cite{Hom-Wu:2016-1}, and $\varphi_j$-invariants~\cite{DHSTmore} all vanish for $K$ and its mirror. (On the other hand, it is not known if $s$-invariant~\cite{Rasmussen:2010-1}, $s_n$-invariant~\cite{Wu:2009-1,Lobb:2009-1,Lobb:2012-1}, $\gimel$-invariant~\cite{Lewark-Lobb:2019-1}, or $s^\#$-invariant~\cite{Kronheimer-Mrowka:2013-1} vanish for rationally slice knots.) As mentioned earlier, $K$ has vanishing Levine-Tristram~\cite{Levine:1969-1,Tristram:1969-1} signature function and represents a finite order element in the algebraic concordance group. 

Regardless, we show that there exist rationally slice knots with arbitrarily large concordance unknotting number.  We show this by using the \emph{$\iota_K$-connected knot Floer homology}, which is an analogue of the connected Heegaard Floer homology of \cite{HHL}, and a lower bound on the unknotting number from~\cite[Theorem 1.1]{AlishahiEftekharyunknotting} (see also \cite{Zemke-funct}). Recall that the \emph{concordance unknotting number} of a knot $K$, denoted by $u_c(K)$, is defined to be
$$u_c(K)=\min\{u(K') \mid K\text{ and }K' \text{ are concordant}\},$$
where $u(K')$ is the unknotting number of $K'$.

\begin{corollary}\label{cor:concordanceunknotting}
If $K_n$ is the $(2n-1, -1)$-cable of the figure-eight knot for $n \geq 2$, then $$u_c(K_n)\geq n.$$\end{corollary}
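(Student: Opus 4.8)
The plan is to combine the concordance invariance of the $\iota_K$-connected knot Floer homology with the Alishahi--Eftekhary lower bound on the unknotting number. Fix a knot $K'$ that is smoothly concordant to $K_n$; it suffices to prove $u(K')\ge n$ and then take the minimum over all such $K'$. Two formal inputs are needed. First, by Zemke the $\iota_K$-complex is a concordance invariant up to local equivalence, and --- in exact analogy with the connected Heegaard Floer homology of \cite{HHL} --- the $\iota_K$-connected complex is extracted from the local equivalence class by choosing a self-local-equivalence of minimal-rank image; hence $\HFKmconn(K')\cong\HFKmconn(K_n)$ as $\F[\U]$-modules. Second, the $\iota_K$-connected complex is a direct summand of $\CFK$, so passing to the minus flavor its $\U$-torsion order satisfies $\Ord_\U\HFKmconn(K)\le\Ord_\U\HFKm(K)$ for every knot $K$.

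Granting these, the bound of \cite[Theorem 1.1]{AlishahiEftekharyunknotting} (see also \cite{Zemke-funct}), which estimates $u(K)$ from below by the $\U$-torsion order of $\HFKm(K)$, yields
$$ u(K') \;\ge\; \Ord_\U\HFKm(K') \;\ge\; \Ord_\U\HFKmconn(K') \;=\; \Ord_\U\HFKmconn(K_n). $$
Thus the corollary reduces to the single inequality $\Ord_\U\HFKmconn(K_n)\ge n$. To prove this I would use the immersed-curve description of $\CFK(K_n)$ over $\F[\U,\V]/(\U\V)$ together with the partial computation of $\iota_K$ from Section~\ref{sec:involutiveof41}: the $(2n-1,-1)$-cabling turns the box summand of $\CFK(4_1)$ into a zigzag of length of order $n$, and one must locate inside the $\iota_K$-connected complex of $K_n$ a $\U$-torsion generator of order at least $n$ coming from that zigzag.

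The main obstacle is this last step, and specifically the fact that only a partial computation of $\iota_K$ is available: one must show that the undetermined part of the involution cannot shorten the relevant torsion piece, so that the partial data, together with the $\F[\U,\V]/(\U\V)$ computation of $\CFK(K_n)$, already forces the $\iota_K$-connected complex to retain a $\U$-torsion element of order at least $n$. As in Theorem~\ref{thm:main}, the odd cabling parameter is essential here --- for an even parameter the $\iota_K$-connected complex collapses --- and the point is that we need the quantitative torsion-order bound, not merely non-triviality, to survive into the connected complex.
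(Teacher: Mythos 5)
Your reduction is the same one the paper uses: concordance invariance of the connected complex (Proposition~\ref{prop:ikconn}) plus the Alishahi--Eftekhary bound \eqref{eq:unknottingord} give exactly Corollary~\ref{cor:torsionorder}, $u_c(K)\geq \Ord_\U(\HFKmconn(K))$, so everything hinges on the single inequality $\Ord_\U(\HFKmconn(K_n))\geq n$. (A small caveat: the paper does not claim the connected complex is a \emph{direct summand} of $\CFK_{\F[\U,\V]}$; it is realized as the image of a maximal self-local equivalence, i.e.\ a subcomplex, and that is what feeds into Corollary~\ref{cor:torsionorder} -- but this does not affect the shape of your reduction.)

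The genuine gap is that you never prove $\Ord_\U(\HFKmconn(K_n))\geq n$; you state it as ``the main obstacle'' and describe what would have to be shown (``the undetermined part of the involution cannot shorten the relevant torsion piece''), which is a formulation of the problem rather than an argument. This inequality is Proposition~\ref{prop:HFKconn}, and it is the entire content of the corollary beyond the formal reduction: knowing $\CFK$ over $\F[\U,\V]/(\U\V)$ and the partial computation of $\iota_K$ does not by itself prevent a self-local equivalence from having image that misses the long zigzag. The paper closes this by arguing about an \emph{arbitrary} self-local equivalence $f$ of $(C_n,\iota_n)$: reducedness gives $f(a)=a \bmod (\U,\V)$; with $\omega=1+\iota_K$ and Lemma~\ref{lem:iotaKcable} ($\iota_K(b)=b+a \bmod (\U,\V)$) one gets $\omega f(b)=a \bmod (\U,\V)$, and combining this with $\d f(b)=0 \bmod (\U^n,\U\V,\V^n)$ and the gradings forces $\langle f(b),b\rangle=1$; the chain-map equation together with $\F[\U,\V]$-equivariance then forces $\langle f(c),c\rangle=1$, and since the only horizontal arrow into any $\U$-power of $c$ is the length-$n$ arrow from $b$, one concludes $\U^{n-1}[f(c)]\neq 0$ in $H_*(\im f/(\V))$, hence the torsion-order bound for the connected homology. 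Without an argument of this kind (controlling every self-local equivalence, not just exhibiting a torsion class in $\HFKm(K_n)$ itself), your proposal does not establish the corollary.
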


The \emph{$4$-ball genus} of $K$, denoted by $g_4(K)$, is the minimal genus of a smooth orientable surface in $B^4$ bounding $K$, and the \emph{$4$-dimensional clasp number} of $K$, denoted by $c_4(K)$, is the minimal
number of transverse double points of a smoothly immersed disk in $B^4$ bounding $K$. Then we have $$u_c(K)\geq c_4(K) \geq g_4(K).$$ It is natural to ask the following question.


\begin{question}\label{q:genus}Does there exist a family of rationally slice knots with arbitrarily large $g_4$ or $c_4$?
\end{question}

We learned that Allison N.\ Miller recently obtained an affirmative answer to Question~\ref{q:genus} by using Casson-Gordon invariants.

Even though in this article we only focus on the difference between the concordance group $\mathcal{C}$ and the rational concordance group $\mathcal{C}_\mathbb{Q}$, one can also consider $\mathcal{C}_\mathbb{Z}$ and $\mathcal{C}_{\Z/2\Z}$, which are defined in a similar way. Again, we have the following natural surjective maps:
$$\psi_1 \co \mathcal{C} \to \mathcal{C}_\mathbb{Z} \qquad \textup{ and } \qquad \psi_2 \co \mathcal{C} \to \mathcal{C}_{\Z/2\Z}.$$

\begin{question}Is the group $\ker \psi_1$ or $\ker \psi_2$ non-trivial?
\end{question}

\begin{remark}
If $K$ is a knot in $S^3$ which bounds a slice disk $D$ in a rational homology ball $W$, then for each $\text{spin}^{c}$-structure $\mathfrak{s}$ on $W$, we get concordance maps $F_{W,D, \mathfrak{s}} :CFK_{\mathbb{F}[\U,\V]}(K)\rightarrow \mathbb{F}[\U,\V]$ and $F_{-W,-D,\mathfrak{s}}:\mathbb{F}[\U,\V]\rightarrow CFK_{\mathbb{F}[\U,\V]}(K)$. These maps would satisfy the homotopy-commutativity conditions $F_{W,D} \circ \iota_K \sim F_{W,D}$ and $\iota_K \circ F_{-W,-D} \sim F_{-W,-D}$ if $\mathfrak{s}$ is self-conjugate, i.e. $\mathfrak{s}=\bar{\mathfrak{s}}$. One can always find such an $\mathfrak{s}$ if $W$ is a $\mathbb{Z}/2\Z$-homology sphere, in which case one can take $\mathfrak{s}$ to be the unique spin structure on $W$. Hence, Theorem~\ref{thm:main} in fact shows that any non-trivial linear combination of $K_n$ represents a non-trivial element in not only $\mathcal{C}_\mathbb{Z}$, but also in $\mathcal{C}_{\Z/2\Z}$. On the other hand, $K_n$ represents the identity in $\mathcal{C}_{\Z/p\Z}$ for any odd $p$.
\end{remark}


\section*{Organization}
The paper is organized as follows. In Section~\ref{sec:background}, we recall the definitions and some facts about the knot Floer complex and the involutive knot Floer complex. In Section~\ref{sec:cfkof41}, we compute the knot Floer complex for $K_n$, and in Section~\ref{sec:involutiveof41}, we do a partial computation of the involutive knot Floer complex for $K_n$ and prove Theorem~\ref{thm:main}. Lastly, in Section~\ref{sec:proofofcor}, we define $\iota_K$-connected knot Floer homology and prove Corollary~\ref{cor:concordanceunknotting}.


\section*{Acknowledgements}
The authors thank Jonathan Hanselman, Tye Lidman, and Liam Watson for helpful correspondence. We are also grateful to our anonymous referees for their careful reading and thoughtful suggestions.

\section{Background}\label{sec:background}
We assume the reader is familiar with knot Floer homology, defined in \cite{OSknots} and \cite{Rasmussen-thesis}. Much of our notation comes from \cite{Zemke-connsuminv}, particularly Section 2 of that paper. We briefly recall the construction, primarily to establish notation. Throughout, $\F = \Z/2\Z$.

Let $\cH = (\Sigma, \alphas, \betas, w, z)$ be a double-pointed Heegaard diagram compatible with a knot $K \subset S^3$. Consider the two variable polynomial ring $\F[\U, \V]$. This ring is bigraded by $\gr=(\gr_\U, \gr_\V)$ where
\[ \gr(\U) = (-2, 0) \qquad \textup{ and } \qquad \gr(\V) = (0, -2). \]
Let $\cR=\F[\U, \V]$ or $\F[\U, \V]/(\U\V)$, where $(\U\V)$ is the ideal generated by $\U\V$.

We now define the \emph{knot Floer complex} (over $\cR$), denoted $\CFK_\cR$. Consider the chain complex $\CFK_\cR(\cH)$ freely generated over $\cR$ by intersection points $\bbT_{\alphas} \cap \bbT_{\betas} \subset \Sym^g \Sigma$ with differential
\[ \d x = \sum_{y \in \bbT_{\alphas} \cap \bbT_{\betas}} \sum_{\substack{\phi \in \pi_2(x, y) \\ \mu(\phi) = 1}} \# \widehat{\mathcal{M}}(\phi) \;  \U^{n_w(\phi)} \V^{n_z(\phi)} y. \]
The following relations give this chain complex a relative bigrading:
\begin{align*}
	\gr_\U(x) - \gr_\U(y) &= \mu(\phi) - 2n_w(\phi) \\
	\gr_\V(x) - \gr_\V(y) &= \mu(\phi) - 2n_z(\phi),
\end{align*}
where $\phi \in \pi_2(x, y)$. Setting $\V=1$  on $CFK_{\F[\U,\V]}$, forgetting $\gr_\V$, and taking homology recovers $\HFm(S^3) \cong \F[\U]$. We pin down the absolute $\U$-grading by setting $\gr_\U(1)=0$. The absolute $\V$-grading is determined analogously, reversing the roles of $\U$ and $\V$. 

The chain homotopy type of this chain complex is a knot invariant, denoted $\CFK_\cR(K)$. It is often convenient to consider the \emph{Alexander grading} 
\[ A = \frac{1}{2} (\gr_\U - \gr_\V). \] 
The $\U$-grading $\gr_\U$ is often called the \emph{Maslov grading}. Note that 
\begin{align*}
	\gr_\U (\d x) &= \gr_\U(x) -1 \\
	\gr_\V (\d x) &= \gr_\V(x) -1 \\
	A(\d x) &= A(x).
\end{align*}
See Figure \ref{fig:figureeight} for an example of a graphical depiction of the knot Floer complex. Note that horizontal arrows of length $n$ encode terms in the differential with coefficients of the form $\U^n$, vertical arrows of length $n$ encode terms in the differential with coefficients of the form $\V^n$, and diagonal arrows encode terms with coefficients with nonzero $\U$ and nonzero $\V$ exponents. (See Figure \ref{fig:CFK7-1cable} for an example with diagonal arrows.) 

The minus and hat flavors of knot Floer homology are defined as
\begin{align*}
	\HFKm(K) &:= H_*(\CFK_{\F[\U, \V]}(K)/(\V)) \\
	\HFKhat(K) &:= H_*(\CFK_{\F[\U, \V]}(K)/(\U, \V)).
\end{align*}

\begin{figure}[htb!]
\begin{center}
\begin{tikzpicture}[scale=1]
	\draw[step=1, black!30!white, very thin] (1.5, 1.5) grid (4.5, 4.5);

	\filldraw (3.8, 3.8) circle (2pt) node[label=above:{\lab{a}}] (a) {};
	\filldraw (3.5, 3.5) circle (2pt) node[label=above:{\lab{b}}] (b) {};
	\filldraw (2.5, 3.5) circle (2pt) node[label=above :{\lab{\U c}}] (c) {};
	\filldraw (3.5, 2.5) circle (2pt) node[label=right:{\lab{\V d}}] (d) {};
	\filldraw (2.5, 2.5) circle (2pt) node[label=left:{\lab{\U \V e}}] (e) {};

	\draw[->] (b) to (c);
	\draw[->] (b) to (d);
	\draw[->] (c) to (e);
	\draw[->] (d) to (e);
	
\end{tikzpicture}
\caption{A graphical depiction of the knot Floer complex $\CFK_{\F[\U, \V]}$ of the figure-eight knot.}
\label{fig:figureeight}
\end{center}
\end{figure}
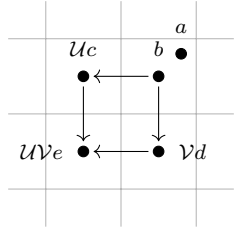

Consider the knot Floer complex over $\F[\U, \V]$ equipped with a basis $B$. The \emph{$\U$-derivative} of an element $\U^i \V^j x$ for $x \in B$ is $i \U^{i-1} \V^j x$. We write 
\[ D_\U \co \CFK_{\F[\U, \V]} \to \CFK_{\F[\U, \V]} \]
for the $\F[\V]$-linear map given by the $\U$-derivative. We define the \emph{$\V$-derivative} $D_\V$ analogously, namely $D_\V(\U^i \V^j x) = j \U^i \V^{j-1}x$ for $x \in B$. Let
\[ \Phi = [\d, D_\U] \qquad \textup{ and } \qquad \Psi = [\d, D_\V]. \]
It is straightforward to verify that $\Phi$ and $\Psi$ are $\F[\U, \V]$-equivariant chain maps and that 
\begin{align*}
	\gr_\U(\Phi(x)) &= \gr_\U(x) + 1 & \gr_\V(\Phi(x)) &= \gr_\V(x) - 1 \\
	\gr_\U(\Psi(x)) &= \gr_\U(x) - 1 & \gr_\V(\Psi(x)) &= \gr_\V(x) + 1.
\end{align*}
Up to chain homotopy, the maps $\Phi$ and $\Psi$ are independent of the choice of basis $B$ \cite[Corollary 2.9]{Zemke-connsuminv}.

\begin{example}
Consider $\CFK_{\F[\U, \V]}$ of the figure-eight knot, as in Figure \ref{fig:figureeight}, which is generated by $a, b, c, d, e$. We have:
\begin{center}
\begin{tabular}{*{16}{@{\hspace{10pt}}c}}
\hline
&  && $\d$ && $\Phi$ && $\Psi$ && $\Psi \circ \Phi$ && $\gr_\U$ && $\gr_\V$ && $A$ \\
\hline
& $a$ && $0$ && $0$ && $0$ && $0$ && $0$ && $0$ && $0$ \\ 
& $b$ && $\U c + \V d$ && $c$ && $d$ && $e$ && $0$ && $0$ && $0$ \\ 
& $c$ && $\V e$ && $0$ && $e$ && $0$ && $1$ && $-1$ && $1$ \\ 
& $d$ && $\U e$ && $e$ && $0$ && $0$ && $-1$ && $1$ && $-1$ \\ 
& $e$ && $0$ && $0$ && $0$ && $0$ && $0$ && $0$ && $0$ \\ 
\hline
\end{tabular}
\end{center}
\end{example}

Hendricks and Manolescu \cite{HendricksManolescu} define a chain map
\[ \iota_K \co \CFK_{\F[\U, \V]} \to \CFK_{\F[\U, \V]}, \]
which satisfies the following properties:
\begin{enumerate}
	\item The map $\iota_K$ is \emph{$\F[\U, \V]$-skew-equivariant}, which means that
\[ \iota_K(\U x ) = \V \iota_K(x)  \qquad \textup{ and } \qquad \iota_K(\V x ) = \U \iota_K(x). \]
	\item The map $\iota_K$ is \emph{skew-graded}, which means that
\[ \gr_\U(\iota_K(x)) = \gr_\V(x) \qquad \textup{ and } \qquad \gr_\V(\iota_K(x)) = \gr_\U(x). \]
	\item Lastly, $\iota_K^2 \simeq 1 + \Psi \Phi. $
\end{enumerate}
Here, $\simeq$ indicates that two chain maps are chain homotopic via $\F[\U, \V]$-equivariant chain homotopy.


The aforementioned results motivate the following abstract definitions.

\begin{definition}\label{def:iotaK}
An \emph{$\iota_K$-complex} $(C, B, \iota)$ consists of a free, finitely generated, bigraded chain complex $C$ over $\F[\U, \V]$ with basis $B$ such that
\begin{enumerate}
	\item the localization $(\U, \V)^{-1}C$ has homology isomorphic to $(\U,\V)^{-1}\F[\U, \V]$,
	\item the map $\iota \co C \to C$ is a $\F[\U, \V]$-skew-equivariant, skew-graded chain map,
	\item $\iota^2 \simeq 1 + \Psi \Phi$.
\end{enumerate}
\end{definition}

\begin{definition}
Two $\iota_K$-complexes $(C_1, B_1, \iota_1)$ and $(C_2, B_2, \iota_2)$ are \emph{homotopy equivalent}, denoted $(C_1, B_1, \iota_1) \simeq (C_2, B_2, \iota_2)$, if there exist $\F[\U, \V]$-equivariant graded chain maps
\[ f \co C_1 \to C_2 \qquad \textup{ and } \qquad g \co C_2 \to C_1 \]
such that
\[ f \iota_1 \skewsimeq \iota_2 f \qquad \textup{ and } \qquad g \iota_2 \skewsimeq \iota_1 g \]
and $fg \simeq \id$ and $gf \simeq \id$.
\end{definition}
\noindent Here, $\skewsimeq$ indicates that two chain maps are chain homotopic via $\F[\U, \V]$-skew-equivariant chain homotopy.

We will often omit the basis $B$ from our notation; up to homotopy equivalence, this does not cause any ambiguity. Again, the chain homotopy type of $\iota_K$-complex $(\CFK_{\F[\U,\V]}(K), \iota_K)$ is a knot invariant, and we call the pair the \emph{$\iota_K$-complex} of a knot $K$. In general, we will be interested in $\iota_K$-complexes up to homotopy equivalence or often an even weaker notion of equivalence, such as local equivalence or almost local equivalence, defined below.

\begin{definition}\label{def:product}
Let $(C_1, \iota_1)$ and $(C_2, \iota_2)$ be $\iota_K$-complexes. We define two products, $\times_1$ and $\times_2$:
\begin{align*}
(C_1, \iota_1) \times_1 (C_2, \iota_2) &= (C_1 \otimes_{\UVring} C_2, \iota_1\otimes \iota_2 + (\Phi_1 \otimes \Psi_2) \circ( \iota_1 \otimes \iota_2)) \\
(C_1, \iota_1) \times_2 (C_2, \iota_2) &= (C_1 \otimes_{\UVring} C_2, \iota_1\otimes \iota_2 + (\Psi_1 \otimes \Phi_2) \circ( \iota_1 \otimes \iota_2)). \end{align*}
\end{definition}

\noindent The products $(C_1, \iota_1) \times_1 (C_2, \iota_2)$ and $(C_1, \iota_1) \times_2 (C_2, \iota_2)$ are homotopy equivalent $\iota_K$-complexes by \cite[Lemmas 2.13 and 2.14]{Zemke-connsuminv}.

The $\iota_K$-complex of the connected sum of two knots is given by the product (in the above sense) of their respective $\iota_K$-complexes \cite[Theorem 1.1]{Zemke-connsuminv}:
\begin{align*}
(\CFK_{\F[\U, \V]}(K_1 \# K_2), \iota_{K_1 \# K_2}) &\simeq (\CFK_{\UVring}(K_1), \iota_{K_1}) \times_1 (\CFK_{\UVring}(K_2), \iota_{K_2}) \\
	&\simeq (\CFK_{\UVring}(K_1), \iota_{K_1}) \times_2 (\CFK_{\UVring}(K_2), \iota_{K_2}).
\end{align*}

The following equivalence relation is particularly well-suited for studying knot concordance:

\begin{definition}\label{def:localmap}
Given two $\iota_K$-complexes $(C_1, \iota_1)$ and $(C_2, \iota_2)$, a bigraded chain map 
\[ f \co C_1 \to C_2 \]
is called a \emph{local map} if
\begin{enumerate}
	\item $f \iota_1 \skewsimeq \iota_2 f$,
	\item $f$ induces an isomorphism on $H_*((\U, \V)^{-1} C_i)$.
\end{enumerate}
If there exist local maps $f \co C_1 \to C_2$ and $g \co C_2 \to C_1$, then we say that $(C_1, \iota_1)$ and $(C_2, \iota_2)$ are \emph{locally equivalent}, and the maps $f$ and $g$ are \emph{local equivalences}.
\end{definition}

Zemke \cite{Zemke-funct}  (see also \cite[Theorem 1.5]{Zemke-connsuminv}) shows that a concordance between $K_0$ and $K_1$ induces local maps 
\[ f \co \CFK_{\F[\U, \V]}(K_0) \to \CFK_{\F[\U, \V]}(K_1) \qquad \textup{ and } \qquad g \co \CFK_{\F[\U, \V]}(K_1) \to \CFK_{\F[\U, \V]}(K_0). \]
Hence if $K_0$ and $K_1$ are concordant, their $\iota_K$-complexes are locally equivalent.

The set of $\iota_K$-complexes modulo local equivalence, with operation induced by either $\times_1$ or $\times_2$, forms a group \cite[Proposition 2.6]{Zemke-connsuminv}, which we denote $\mathfrak{I}_K$.  By \cite[Theorem 1.5]{Zemke-connsuminv}, we have that
\begin{align*}
	\cC &\to \mathfrak{I}_K \\
	[K] &\mapsto [(\CFK_{\F[\U,\V]}(K), \iota_K)]
\end{align*}
is a well-defined group homomorphism.

It can sometimes be cumbersome to work with $\iota_K$-complexes. Let $(\U, \V)$ denote the ideal generated by $\U$ and $\V$. Motivated by \cite{DHSThomcobord} (see also \cite{DHSTmore}), we make the following more relaxed definition:

\begin{definition}
An \emph{almost $\iota_K$-complex $(C, \iota)$} consists of a free, finitely generated, bigraded chain complex $C$ over $\UVring$ such that
\begin{enumerate}
	\item the localization $(\U, \V)^{-1}C$ has homology isomorphic to $(\U,\V)^{-1}\F[\U, \V]$,
	\item the map $\iota \co C/(\U, \V) \to C/(\U, \V)$ is an $\F$-linear, skew-graded chain map,
	\item $\iota^2 \simeq 1 + \Psi \Phi \mod (\U, \V)$.
\end{enumerate}
\end{definition}


Note that the above definition is obtained by adding ``mod $(\U, \V)$'' to every statement about $\iota$ in Definition \ref{def:iotaK}. The almost $\iota_K$-complex obtained from the $\iota_K$-complex of $K$ is called the \emph{almost $\iota_K$-complex} of $K$. Moreover, we may define homotopy equivalence of two almost $\iota_K$-complexes as in Definition~\ref{def:iotaK} ``mod $(\U, \V)$''. Following this approach, we analogously modify Definition \ref{def:localmap}:

\begin{definition}
Given two almost $\iota_K$-complexes $(C_1, \iota_1)$ and $(C_2, \iota_2)$, a bigraded chain map 
\[ f \co C_1 \to C_2 \]
is called an \emph{almost local map} if
\begin{enumerate}
	\item $f \iota_1 \skewsimeq \iota_2 f \mod (\U, \V)$,
	\item $f$ induces an isomorphism on $H_*((\U, \V)^{-1} C_i)$.
\end{enumerate}
If there exist almost local maps $f \co C_1 \to C_2$ and $g \co C_2 \to C_1$, then we say that $(C_1, \iota_1)$ and $(C_2, \iota_2)$ are \emph{almost locally equivalent}, and the maps $f$ and $g$ are \emph{almost local equivalences}.
\end{definition}

\begin{example}
Consider the following two $\iota$-complexes: $C_1=\F[\U, \V]$ generated by $x$ in grading $(0, 0)$ with $\iota_1 = \id$, and $C_2$ generated by $a, b, c, d,$ and $e$ with 
\begin{center}
\begin{tabular}{*{16}{@{\hspace{10pt}}c}}
\hline
&  && $\d$ && $\iota_K$ && $\gr_\U$ && $\gr_\V$ && $A$ \\
\hline
& $a$ && $0$ && $a+\U^2 \V^2 e$ && $0$ && $0$ && $0$ \\ 
& $b$ && $\U^3 c + \V^3 d$ && $b+a$ && $0$ && $0$ && $0$ \\ 
& $c$ && $\V^3 e$ && $d$ && $5$ && $-1$ && $3$ \\ 
& $d$ && $\U^3 e$ && $c$ && $-1$ && $5$ && $-3$ \\ 
& $e$ && $0$ && $e$ && $4$ && $4$ && $0$ \\ 
\hline
\end{tabular}
\end{center}
Then the map $f \co C_1 \to C_2$ sending $x$ to $a$ and the map $g \co C_2 \to C_1$ sending $a$ to $x$ and $b, c, d, e$ to $0$ provide almost-local equivalences between $(C_1, \iota_1)$ and $(C_2, \iota_2)$. However, the two $\iota$-complexes are not locally equivalent; for example, the Hendricks-Manolescu $\Vl_0$-invariant of the two complexes are different. Note that $C_2$ is locally equivalent to the $\iota_K$-complex of $-2T_{6,7} \# T_{6, 13}$; see \cite[Section 4.1]{HHSZ}.
\end{example}

We leave it as an exercise for the reader to verify that the proof that $\mathfrak{I}_K$ is a group \cite[Proposition 2.6]{Zemke-connsuminv} readily adapts to show that almost $\iota_K$-complexes modulo almost local equivalence, under the analogous operation, form a group. In analogy to \cite[Definition 3.15]{DHSThomcobord}, we denote this group by $\widehat{\mathfrak{I}}_K$. There is a forgetful homomorphism
\[ \mathfrak{I}_K \to \widehat{\mathfrak{I}}_K. \]
(We do not actually need the group $\widehat{\mathfrak{I}}_K$ in this paper.)

The proof of our main theorem uses the following simple observation: a local map $f \co (C_1, \iota_1) \to (C_2, \iota_2)$ of $\iota_K$ complexes induces an almost local map $\widehat{f} \co (C_1, \iota_1) \to (C_2, \iota_2)$. Equivalently, if there is no almost local map $(C_1, \iota_1) \to (C_2, \iota_2)$, then $(C_1, \iota_1)$ and $(C_2, \iota_2)$ are not locally equivalent. We summarize the above discussions in the following.

\begin{lemma}\label{lem:concordantalmostequiv}
If two knots $K_0$ and $K_1$ are concordant, then their almost $\iota_K$-complexes are almost locally equivalent.\qed\end{lemma}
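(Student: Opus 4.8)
The plan is to assemble the statement from the facts already collected in the excerpt. The key input is Zemke's theorem \cite{Zemke-funct} (see also \cite[Theorem 1.5]{Zemke-connsuminv}), which guarantees that a smooth concordance between $K_0$ and $K_1$ induces $\F[\U,\V]$-equivariant chain maps $f \co \CFK_{\F[\U,\V]}(K_0) \to \CFK_{\F[\U,\V]}(K_1)$ and $g$ in the reverse direction, each of which is a local map in the sense of Definition~\ref{def:localmap}; in particular $f\iota_{K_0} \skewsimeq \iota_{K_1} f$, and $f$ (resp.\ $g$) induces an isomorphism on the homology of the localization $(\U,\V)^{-1}\CFK_{\F[\U,\V]}$. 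Thus $(\CFK_{\F[\U,\V]}(K_0), \iota_{K_0})$ and $(\CFK_{\F[\U,\V]}(K_1), \iota_{K_1})$ are locally equivalent as $\iota_K$-complexes.

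Next I would invoke the observation recorded just before the lemma: reducing a local map modulo the ideal $(\U,\V)$ produces an almost local map between the associated almost $\iota_K$-complexes. Concretely, given the local map $f$, the induced map $\widehat f \co C_0/(\U,\V) \to C_1/(\U,\V)$ (on the underlying complexes before quotienting, $f$ itself, viewed as a map of complexes over $\F[\U,\V]$ whose interaction with $\iota$ is only required modulo $(\U,\V)$) satisfies $\widehat f\, \iota_{K_0} \skewsimeq \iota_{K_1}\, \widehat f \bmod (\U,\V)$ because the stronger relation $f\iota_{K_0}\skewsimeq\iota_{K_1}f$ holds over $\F[\U,\V]$ and hence descends; and $\widehat f$ still induces an isomorphism on $H_*((\U,\V)^{-1}C_i)$ since that localization only depends on the ambient complex over $\F[\U,\V]$, which is unchanged. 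The same argument applies to $g$, yielding almost local maps in both directions, so the almost $\iota_K$-complexes of $K_0$ and $K_1$ are almost locally equivalent. This proves the lemma.

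The only point requiring a little care — and the step I would expect to be the main (mild) obstacle — is checking that the skew chain homotopy witnessing $f\iota_{K_0}\skewsimeq\iota_{K_1}f$ descends correctly modulo $(\U,\V)$ and that the notion of ``almost local map'' is genuinely weaker than ``local map'' rather than accidentally requiring extra data; once one unwinds the definitions this is immediate, since every condition defining an almost $\iota_K$-complex, almost homotopy equivalence, and almost local map is literally obtained from the corresponding unstarred condition by appending ``$\bmod (\U,\V)$'', as the excerpt emphasizes. In other words, there is a forgetful functor from the category of $\iota_K$-complexes and local maps to the category of almost $\iota_K$-complexes and almost local maps, and the lemma is just the composition of Zemke's theorem with this forgetful functor. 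I would phrase the proof in one short paragraph along exactly these lines, citing \cite{Zemke-funct} and \cite[Theorem 1.5]{Zemke-connsuminv} for the local equivalence and then noting the reduction mod $(\U,\V)$.
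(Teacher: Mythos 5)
Your argument is correct and is essentially the paper's own: the lemma is stated with a \qed precisely because it is the composition of Zemke's theorem (a concordance induces local maps in both directions) with the forgetful observation, recorded immediately before the lemma, that a local map of $\iota_K$-complexes is in particular an almost local map since every defining condition is only weakened by working modulo $(\U,\V)$.
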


Lastly, we conclude this section with the following useful algebraic lemma. 

\begin{definition}
A chain complex $C$ over $\UVring$ is \emph{reduced} if $\im \d \subset (\U, \V)$.
\end{definition}

\begin{lemma}\label{lem:homotopyreduced}
Let $f$ and $g$ be graded chain maps between reduced knot Floer complexes. Suppose that $f \simeq g \mod (\U,\V)$. Then 
\[ f = g\mod (\U, \V). \]
The analogous statement holds for skew-homotopies as well.
\end{lemma}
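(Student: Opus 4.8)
The plan is to unwind the definition of chain homotopy and use the grading constraints. Suppose $f \simeq g$ via a graded $\F[\U,\V]$-equivariant homotopy $H$, so that $f - g = \d H + H \d$. Since both $C_1$ and $C_2$ are reduced, we have $\im \d \subset (\U,\V) C_2$. The term $\d H$ clearly lands in $(\U,\V) C_2$. For the term $H \d$: since $\d$ maps $C_1$ into $(\U,\V) C_1$, and $H$ is $\F[\U,\V]$-equivariant, $H(\d x) \in (\U,\V) C_2$. Hence $f - g = \d H + H\d$ takes values in $(\U,\V) C_2$, which is precisely the statement that $f = g \mod (\U,\V)$.

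More carefully, I would phrase the last step as follows. Write the reduction map $\pi \co C_2 \to C_2/(\U,\V)$. Then $\pi \circ (f - g) = \pi \circ (\d H + H\d)$. Because $\im \d \subset (\U,\V) C_2$, we get $\pi \circ \d = 0$, so $\pi \circ \d H = 0$. Because $H$ is $\F[\U,\V]$-linear and $\d x \in (\U,\V) C_1$ for all $x$ (reducedness of $C_1$), we get $H(\d x) \in (\U,\V) C_2$, so $\pi \circ H \circ \d = 0$. Therefore $\pi \circ (f-g) = 0$, i.e.\ $f - g \co C_1 \to C_2$ has image in $(\U,\V) C_2$. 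This is exactly the assertion $f = g \mod (\U,\V)$, once one notes that $f$ and $g$ descend to maps $C_1/(\U,\V) \to C_2/(\U,\V)$ and these descended maps agree. (One should also observe that the homotopy $H$ need not be reduced, but this is irrelevant: we only used that $H$ is $\F[\U,\V]$-equivariant.)

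For the skew-homotopy case, the argument is identical: if $f \skewsimeq g$ via a skew-equivariant $H$ with $f - g = \d H + H \d$, the same two observations apply — skew-equivariance still guarantees $H$ maps $(\U,\V) C_1$ into $(\U,\V) C_2$, since $H(\U x) = \V H(x) \in (\U,\V) C_2$ and likewise for $\V$ — so $f - g$ again has image in $(\U,\V) C_2$.

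I do not expect a genuine obstacle here; the lemma is essentially a bookkeeping observation about reduced complexes. The only point requiring a moment's care is to make explicit that reducedness of \emph{both} the source and target is used: reducedness of the target handles the $\d H$ term, and reducedness of the source (together with equivariance of $H$) handles the $H\d$ term. I would state these two uses clearly and keep the proof to a few lines.
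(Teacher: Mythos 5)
Your proof is correct and is essentially the paper's own argument, just written out in more detail: the paper likewise writes $f = g + \d H + H\d$ and observes that reducedness forces $\d H + H\d \equiv 0 \bmod (\U,\V)$, with the skew case identical. Your explicit separation of the two uses of reducedness (target for $\d H$, source plus (skew-)equivariance of $H$ for $H\d$) is a fair elaboration of the same one-line proof.
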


\begin{proof}
Let $f, g \co C_1 \to C_1$. If $f \simeq g \mod (\U,\V)$, then there exists a homotopy $H$ such that
\[ f = g + \d H + H \d \mod (\U,\V).\]
If $C_1$ and $C_2$ are reduced, then $\d H + H \d = 0 \mod (\U, \V)$. The skew-homotopy proof is identical.
\end{proof}

\section{The knot Floer complex of cables of the figure-eight knot}\label{sec:cfkof41}
The goal of this section is to compute the knot Floer complex of $(2n-1,-1)$-cables of the figure-eight knot for $n \geq 2$. Our main tool will be Hanselman and Watson's cabling formula in terms of immersed curves \cite{HW-cables}. Recall that the immersed curve associated to a knot complement determines the knot Floer complex of the knot over $\F[\U, \V]/(\U\V)$. We will use the fact that $\d^2=0$ to lift this computation to $\F[\U, \V]$. We use \cite{Petkova} to determine the absolute gradings.

\begin{proposition}\label{prop:CFKcable}
Fix an integer $n \geq 2$. The knot Floer complex of the $(2n-1, -1)$-cable of the figure-eight knot takes the form described in the following table, where $1 \leq i \leq n-2:$

\begin{center}
\begin{tabular}{*{12}{@{\hspace{10pt}}c}}
\hline
& && $\partial$ && $\gr_\U$ && $\gr_\V$ && $A$  \\
\hline
& $a$ && $0$ && $0$ && $0$ && $0$ \\ 
& $b$ && $\U^n c +\U\V d +\V^n e$ && $0$ && $0$ && $0$ \\ 
& $c$ && $\V f$ && $2n-1$ && $-1$ && $n$  \\ 
& $d$ && $\U^{n-1} f + \V^{n-1} g$ && $1$ && $1$ && $0$  \\ 
& $e$ && $\U g$ && $-1$ && $2n-1$ && $-n$  \\ 
& $f$ && $0$ && $2n-2$ && $0$ && $n-1$  \\ 
& $g$ && $0$ && $0$ && $2n-2$ && $-n+1$  \\ 
& $b_{0,i}$ && $\U^{n+i} c_{0,i} +\U\V d_{0,i} +\V^{n-i} e_{0,i}$ && $0$ && $0$ && $0$ \\ 
& $c_{0,i}$ && $\V f_{0,i}$ && $2n+2i-1$ && $-1$ && $n+i$  \\ 
& $d_{0,i}$ && $\U^{n+i-1} f_{0,i} + \V^{n-i-1} g_{0,i}$ && $1$ && $1$ && $0$  \\ 
& $e_{0,i}$ && $\U g_{0,i}$ && $-1$ && $2n-2i-1$ && $-n+i$  \\ 
& $f_{0,i}$ && $0$ && $2n+2i-2$ && $0$ && $n+i-1$  \\ 
& $g_{0,i}$ && $0$ && $0$ && $2n-2i-2$ && $-n+i+1$  \\ 
& $b_{1,i}$ && $\U^{n-i} c_{1,i} +\U\V d_{1,i} +\V^{n+i} e_{1,i}$ && $0$ && $0$ && $0$ \\ 
& $c_{1,i}$ && $\V f_{1,i}$ && $2n-2i-1$ && $-1$ && $n-i$  \\ 
& $d_{1,i}$ && $\U^{n-i-1} f_{1,i} + \V^{n+i-1} g_{1,i}$ && $1$ && $1$ && $0$  \\ 
& $e_{1,i}$ && $\U g_{1,i}$ && $-1$ && $2n+2i-1$ && $-n-i$  \\ 
& $f_{1,i}$ && $0$ && $2n-2i-2$ && $0$ && $n-i-1$  \\ 
& $g_{1,i}$ && $0$ && $0$ && $2n+2i-2$ && $-n-i+1$  \\ 
& $b_{0,n-1}$ && $\U^{2n-1}c_{0,n-1} + \V e_{0,n-1}$ && $0$ && $0$ && $0$  \\ 
& $c_{0,n-1}$ && $V f_{0,n-1}$ && $4n-3$ && $-1$ && $2n-1$  \\ 
& $e_{0,n-1}$ && $\U^{2n-1} f_{0,n-1}$ && $-1$ && $1$ && $-1$  \\ 
& $f_{0,n-1}$ && $0$ && $4n-4$ && $0$ && $2n-2$  \\ 
& $b_{1,n-1}$ && $\U c_{1,n-1} + \V^{2n-1} e_{1,n-1}$ && $0$ && $0$ && $0$  \\ 
& $c_{1,n-1}$ && $\V^{2n-1} g_{1,n-1}$ && $1$ && $-1$ && $1$  \\ 
& $e_{1,n-1}$ && $\U g_{1,n-1}$ && $-1$ && $4n-3$ && $-2n+1$  \\ 
& $g_{1,n-1}$ && $0$ && $0$ && $4n-4$ && $-2n+2$  \\ 
\hline
\end{tabular}
\end{center}
\end{proposition}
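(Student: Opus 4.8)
The plan is to compute $\CFK_{\F[\U,\V]/(\U\V)}(K_n)$ directly from the immersed-curve description of the figure-eight complement and the Hanselman--Watson cabling formula~\cite{HW-cables}, and then to lift this answer over $\F[\U,\V]$ using only the constraint $\d^2=0$. First I would recall that the immersed curve $\gamma(4_1)$ in the punctured torus consists of the distinguished component (a horizontal line) together with one figure-eight-shaped component wrapping once around; this is the curve whose minus-type chain complex over $\F[\U,\V]/(\U\V)$ is exactly the complex of Figure~\ref{fig:figureeight}. Applying the $(p,-1)$-cabling operation with $p=2n-1$ amounts to taking the preimage of $\gamma(4_1)$ under the $(2n-1)$-fold cover of the punctured torus in the appropriate direction and then reinterpreting the lifted curve back in the torus; concretely, the distinguished line lifts to a single line (contributing the generators $a,\dots,g$ of the ``central'' block in the table), while the figure-eight component, being null-homologous, lifts to $2n-2$ disjoint figure-eight-type components, which one indexes by a parity bit and an integer $1\le i\le n-1$ (giving the blocks labeled $b_{0,i},\dots,g_{0,i}$ and $b_{1,i},\dots,g_{1,i}$, with the $i=n-1$ blocks degenerating as recorded). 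Reading off the lengths of the horizontal and vertical segments of each lifted component produces the $\U$- and $\V$-powers appearing in each differential.

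Next I would pin down the bigrading. Over $\F[\U,\V]/(\U\V)$ the immersed curve only records relative gradings, so I would use Petkova's formula~\cite{Petkova} (equivalently, the behavior of gradings under cabling) to fix the absolute Maslov and Alexander gradings of the distinguished-component generators, and then propagate gradings across each figure-eight block using the grading shifts $\gr_\U(\d x)=\gr_\U(x)-1$, $\gr_\V(\d x)=\gr_\V(x)-1$, together with the fact that within each block the generators $b_{*,i}$ sit in Alexander grading $0$ (this is forced, up to the overall normalization, by the symmetry of the cable pattern and the self-conjugation symmetry of knot Floer homology). This gives all the entries in the $\gr_\U$, $\gr_\V$, $A$ columns; $A=\frac12(\gr_\U-\gr_\V)$ is then just a bookkeeping check.

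The one genuinely new step is the lift from $\F[\U,\V]/(\U\V)$ to $\F[\U,\V]$: the immersed curve only sees the differential modulo $\U\V$, so a priori each diagonal arrow $\U^a\V^b$ with $a,b\ge 1$ is invisible, and one must argue that the central block genuinely contains the diagonal term $\U\V\, d$ in $\d b$ (and likewise $\U\V\,d_{0,i}$, $\U\V\,d_{1,i}$) while no further diagonal terms occur. For the existence of the $\U\V$ term: the hat-flavored homology $\HFKhat(K_n)=H_*(\CFK/(\U,\V))$ and the minus-flavored homology $\HFKm(K_n)$ are computable independently (from the Alexander polynomial and $\tau$-type data of cables of $4_1$, or directly from the immersed curve at the level of $\F[\U,\V]/(\U\V)$, which already determines these), and the rank of $\HFKhat$ together with $\d^2=0$ forces the coefficient of $d$ in $\d b$ to be a unit times $\U\V$ rather than zero; absence of other diagonal terms follows because every other potential target has the wrong bigrading. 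So the structure of the argument is: (i) read off the mod-$\U\V$ complex from the lifted immersed curve, (ii) fix gradings via Petkova, (iii) observe that the only bigraded freedom in lifting to $\F[\U,\V]$ is the coefficient of $d$-type generators in the $b$-type differentials, and (iv) use $\d^2=0$ and the known homology to show that coefficient is nonzero, hence (after rescaling the basis) equal to $\U\V$. I expect step~(iv) — controlling the diagonal arrows that the immersed curve cannot see — to be the main obstacle, though it is a short finite check once the mod-$\U\V$ picture and the gradings are in hand.
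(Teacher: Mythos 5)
Your overall route coincides with the paper's: read off the complex modulo $(\U\V)$ from the Hanselman--Watson immersed-curve cabling formula, fix absolute gradings using Petkova, and lift to $\F[\U,\V]$ via $\d^2=0$. The genuine gap is in the grading step. Each closed ``figure-eight'' component of the cabled curve is a direct summand of the mod-$(\U\V)$ complex that dies after localizing, so neither $\HF^-(S^3)$ nor the relative gradings pin down its overall grading offset; and the conjugation symmetry of knot Floer homology only interchanges the $(0,i)$-block with the $(1,i)$-block (and preserves the central block), which gives $A(b)=0$ and $A(b_{0,i})=-A(b_{1,i})$ but leaves an undetermined even shift of $(\gr_\U,\gr_\V)$ on each block and does not force $A(b_{0,i})=0$. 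This is exactly where the paper has to do real work: Lemma \ref{lem:Petkova} extracts from Petkova's computation the $\F[\U]/(\U^i)$-summands of $\HFKm$ of the $(p,1)$-cable together with the Maslov and Alexander gradings of their even-degree generators, and after dualizing (to pass from the $(p,1)$- to the $(p,-1)$-cable) these are matched with $b^*$, $b^*_{0,j}$, $b^*_{1,k}$ to conclude $\gr_\U=\gr_\V=0$ for \emph{every} $b$-type generator, which then determines all absolute gradings. Your appeal to ``symmetry of the cable pattern and self-conjugation'' does not substitute for this input, so as written the $\gr_\U$, $\gr_\V$, $A$ columns of the table are not justified.

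Two smaller points. In the covering/Hanselman--Watson picture the distinguished horizontal component contributes only the generator $a$; the central block $b,\dots,g$ is one of the $2n-1$ (not $2n-2$) lifts of the closed component of the figure-eight curve, so your attribution and component count are off by one, even though the resulting table is the same. In the lifting step, the rank of $\HFKhat$ carries no information about diagonal arrows, since any term divisible by $\U\V$ vanishes in $C/(\U,\V)$; what forces the term $\U\V d$ in $\d b$ is $\d^2=0$ alone, as in the paper. Finally, it is not true that every other potential diagonal target has the wrong bigrading: for instance $\U\V d_{0,i}$, and indeed the whole cycle $\U^{n+i}c_{0,i}+\U\V d_{0,i}+\V^{n-i}e_{0,i}$, sits in the correct bigrading to appear in $\d b$; such terms are excluded (or removed) using $\d^2=0$ together with a change of basis, which is the ``up to a possible change of basis'' caveat in the paper's proof.
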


See Figure \ref{fig:CFK7-1cable} for a graphical depiction of the knot Floer complex of the $(7, -1)$-cable of the figure-eight knot.

\begin{figure}[htb!]
\begin{center}
\subfigure[]{
\begin{tikzpicture}[scale=0.7]
	\draw[step=1, black!30!white, very thin] (-4.5, 0.5) grid (4.5, 4.5);

	\filldraw (3.5, 3.5) circle (2pt) node[label=above:{\lab{b_{0,3}}}] (b) {};
	\filldraw (-3.5, 3.5) circle (2pt) node[label=above :{\lab{\U^7 c_{0,3}}}] (c) {};
	\filldraw (3.5, 2.5) circle (2pt) node[label=right:{\lab{\V e_{0,3}}}] (e) {};
	\filldraw (-3.5, 2.5) circle (2pt) node[label=left:{\lab{\U^7 \V f_{0,3}}}] (f) {};

	\draw[->] (b) to (c);
	\draw[->] (e) to (f);
	\draw[->] (b) to (e);
	\draw[->] (c) to (f);

\end{tikzpicture}
}
\hspace{10pt}
\subfigure[]{
\begin{tikzpicture}[scale=0.7]
	\draw[step=1, black!30!white, very thin] (-3.5, 0.5) grid (4.5, 4.5);

	\filldraw (3.5, 3.5) circle (2pt) node[label=above:{\lab{b_{0,2}}}] (b) {};
	\filldraw (-2.5, 3.5) circle (2pt) node[label=above :{\lab{\U^6 c_{0,2}}}] (c) {};
	\filldraw (2.5, 2.5) circle (2pt) node[label=below left:{\lab{\U \V d_{0,2}}}] (d) {};
	\filldraw (3.5, 1.5) circle (2pt) node[label=right:{\lab{\V^2 e_{0,2}}}] (e) {};
	\filldraw (-2.5, 2.5) circle (2pt) node[label=left:{\lab{\U^6 \V f_{0,2}}}] (f) {};
	\filldraw (2.5, 1.5) circle (2pt) node[label=below:{\lab{\U \V^2 g_{0,2}}}] (g) {};

	\draw[->] (b) to (c);
	\draw[->] (b) to (d);
	\draw[->] (b) to (e);
	\draw[->] (c) to (f);
	\draw[->] (d) to (f);
	\draw[->] (d) to (g);
	\draw[->] (e) to (g);

\end{tikzpicture}\label{subfig:CFKb}
}
\hspace{10pt}
\subfigure[]{
\begin{tikzpicture}[scale=0.7]
	\draw[step=1, black!30!white, very thin] (-2.5, -1.5) grid (4.5, 4.5);

	\filldraw (3.5, 3.5) circle (2pt) node[label=above:{\lab{b_{0,1}}}] (b) {};
	\filldraw (-1.5, 3.5) circle (2pt) node[label=above :{\lab{\U^5 c_{0,1}}}] (c) {};
	\filldraw (2.5, 2.5) circle (2pt) node[label=below left:{\lab{\U \V d_{0,1}}}] (d) {};
	\filldraw (3.5, 0.5) circle (2pt) node[label=right:{\lab{\V^3 e_{0,1}}}] (e) {};
	\filldraw (-1.5, 2.5) circle (2pt) node[label=left:{\lab{\U^5 \V f_{0,1}}}] (f) {};
	\filldraw (2.5, 0.5) circle (2pt) node[label=below:{\lab{\U \V^3 g_{0,1}}}] (g) {};

	\draw[->] (b) to (c);
	\draw[->] (b) to (d);
	\draw[->] (b) to (e);
	\draw[->] (c) to (f);
	\draw[->] (d) to (f);
	\draw[->] (d) to (g);
	\draw[->] (e) to (g);

\end{tikzpicture}
}
\hspace{10pt}
\subfigure[]{
\begin{tikzpicture}[scale=0.7]
	\draw[step=1, black!30!white, very thin] (-1.5, -1.5) grid (4.5, 4.5);

	\filldraw (3.8, 3.8) circle (2pt) node[label=above:{\lab{a}}] (a) {};
	\filldraw (3.5, 3.5) circle (2pt) node[label=above:{\lab{b}}] (b) {};
	\filldraw (-0.5, 3.5) circle (2pt) node[label=above :{\lab{\U^4 c}}] (c) {};
	\filldraw (2.5, 2.5) circle (2pt) node[label=below left:{\lab{\U \V d}}] (d) {};
	\filldraw (3.5, -0.5) circle (2pt) node[label=right:{\lab{\V^4 e}}] (e) {};
	\filldraw (-0.5, 2.5) circle (2pt) node[label=left:{\lab{\U^4 \V f}}] (f) {};
	\filldraw (2.5, -0.5) circle (2pt) node[label=below:{\lab{\U \V^4 g}}] (g) {};

	\draw[->] (b) to (c);
	\draw[->] (b) to (d);
	\draw[->] (b) to (e);
	\draw[->] (c) to (f);
	\draw[->] (d) to (f);
	\draw[->] (d) to (g);
	\draw[->] (e) to (g);

\end{tikzpicture}
}
\hspace{10pt}
\subfigure[]{
\begin{tikzpicture}[scale=0.7]
	\draw[step=1, black!30!white, very thin] (-1.5, -4.5) grid (4.5, 4.5);

	\filldraw (3.5, 3.5) circle (2pt) node[label=above:{\lab{b_{1,1}}}] (b) {};
	\filldraw (0.5, 3.5) circle (2pt) node[label=above :{\lab{\U^3 c_{1,1}}}] (c) {};
	\filldraw (2.5, 2.5) circle (2pt) node[label=below left:{\lab{\U \V d_{1,1}}}] (d) {};
	\filldraw (3.5, -1.5) circle (2pt) node[label=right:{\lab{\V^5 e_{1,1}}}] (e) {};
	\filldraw (0.5, 2.5) circle (2pt) node[label=left:{\lab{\U^3 \V f_{1,1}}}] (f) {};
	\filldraw (2.5, -1.5) circle (2pt) node[label=below:{\lab{\U \V^5 g_{1,1}}}] (g) {};

	\draw[->] (b) to (c);
	\draw[->] (b) to (d);
	\draw[->] (b) to (e);
	\draw[->] (c) to (f);
	\draw[->] (d) to (f);
	\draw[->] (d) to (g);
	\draw[->] (e) to (g);

\end{tikzpicture}
}
\hspace{10pt}
\subfigure[]{
\begin{tikzpicture}[scale=0.7]
	\draw[step=1, black!30!white, very thin] (-1.5, -4.5) grid (4.5, 4.5);

	\filldraw (3.5, 3.5) circle (2pt) node[label=above:{\lab{b_{1,2}}}] (b) {};
	\filldraw (1.5, 3.5) circle (2pt) node[label=above :{\lab{\U^2 c_{1,2}}}] (c) {};
	\filldraw (2.5, 2.5) circle (2pt) node[label=below left:{\lab{\U \V d_{1,2}}}] (d) {};
	\filldraw (3.5, -2.5) circle (2pt) node[label=right:{\lab{\V^6 e_{1,2}}}] (e) {};
	\filldraw (1.5, 2.5) circle (2pt) node[label=left:{\lab{\U^2 \V f_{1,2}}}] (f) {};
	\filldraw (2.5, -2.5) circle (2pt) node[label=below:{\lab{\U \V^6 g_{1,2}}}] (g) {};

	\draw[->] (b) to (c);
	\draw[->] (b) to (d);
	\draw[->] (b) to (e);
	\draw[->] (c) to (f);
	\draw[->] (d) to (f);
	\draw[->] (d) to (g);
	\draw[->] (e) to (g);

\end{tikzpicture}\label{subfig:CFKf}
}
\hspace{10pt}
\subfigure[]{
\begin{tikzpicture}[scale=0.7]
	\draw[step=1, black!30!white, very thin] (-1.5, -4.5) grid (4.5, 4.5);

	\filldraw (3.5, 3.5) circle (2pt) node[label=above:{\lab{b_{1,3}}}] (b) {};
	\filldraw (2.5, 3.5) circle (2pt) node[label=left :{\lab{\U c_{1,3}}}] (c) {};
	\filldraw (3.5, -3.5) circle (2pt) node[label=right:{\lab{\V^7 e_{1,3}}}] (e) {};
	\filldraw (2.5, -3.5) circle (2pt) node[label=left:{\lab{\U \V^7 g_{1,3}}}] (g) {};

	\draw[->] (b) to (c);
	\draw[->] (c) to (g);
	\draw[->] (b) to (e);
	\draw[->] (e) to (g);

\end{tikzpicture}\label{subfig:CFKg}
}
\caption{A graphical depiction of the knot Floer complex of the $(7, -1)$-cable of the figure-eight knot.}
\label{fig:CFK7-1cable}
\end{center}
\end{figure}
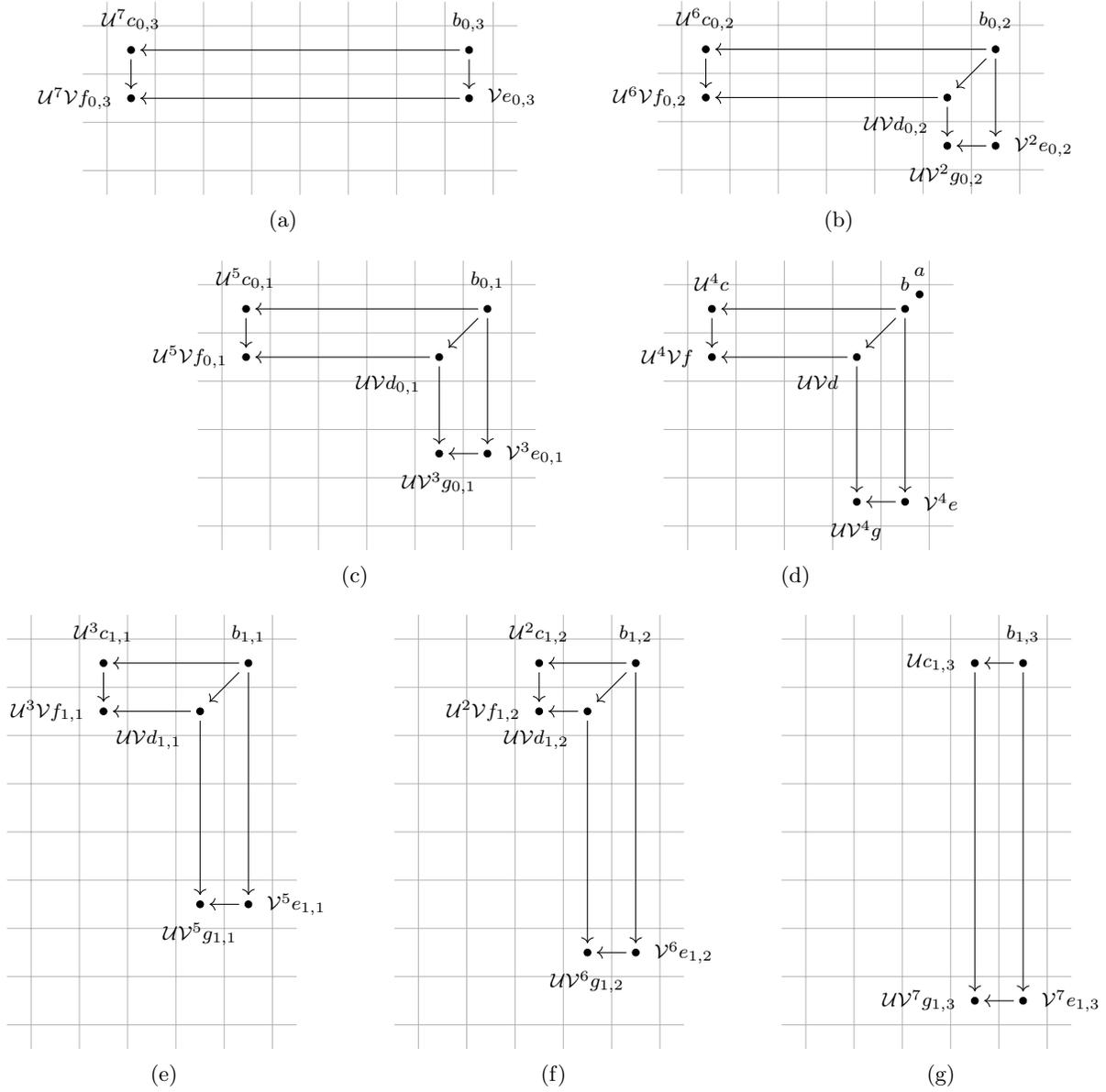

The following lemma will be of use for computing the absolute gradings in Proposition \ref{prop:CFKcable}.

\begin{lemma}\label{lem:Petkova}
For $p \geq 2$, we have that $\HFKm$ of the $(p,1)$-cable of the figure-eight knot contains summands of the following form for $2 \leq i \leq p-1:$
\begin{center}
\begin{tabular}{*{12}{@{\hspace{10pt}}c}}
\hline
& \textup{summand} && \textup{generator} && \textup{Maslov grading} && \textup{Alexander grading}  \\
\hline
& $\F[\U]/(\U^p)$ && $ax_3$ && $0$ && $0$  \\ 
& $\F[\U]/(\U^{i})$ && $b_{p+i-1}y_3$ && $0$ && $0$  \\ 
\hline
\end{tabular}
\end{center}
\end{lemma}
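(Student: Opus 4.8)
The content of this lemma is extracted from Petkova's computation \cite{Petkova} of the knot Floer homology of cables of thin knots; the plan is to recall the relevant portion of that computation and feed in the data of the figure-eight knot. As input I would first record that $4_1$ is a genus-one \emph{thin} knot with $\tau(4_1)=0$ (equivalently $\varepsilon(4_1)=0$), so its knot Floer complex is the ``box plus a generator'' complex of Figure~\ref{fig:figureeight}, and the bordered invariant of $S^3\setminus 4_1$, in the standard form used in \cite{Petkova}, is explicit, with a distinguished generator $a$ realizing $\tau$ together with further generators, indexed here by the $b_j$, coming from the box.

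Next, recall from \cite{Petkova} the bordered module of the $(p,1)$-cabling pattern in the solid torus, whose underlying knot $T_{p,1}$ is the unknot; among its generators are those labeled $x_3$ and $y_3$. Pairing the two bordered invariants --- equivalently, specializing Petkova's theorem --- one obtains the knot Floer complex of the $(p,1)$-cable over $\F[\U,\V]$, and reducing modulo $\V$ yields $\HFKm$ of the $(p,1)$-cable as a direct sum of a free $\F[\U]$ and a collection of torsion summands, one for each horizontal arrow of the complex. From Petkova's description one then checks that the torsion summand generated by the cycle $a\otimes x_3$ is $\F[\U]/(\U^{p})$, and that for each $2\le i\le p-1$ the torsion summand generated by $b_{p+i-1}\otimes y_3$ is $\F[\U]/(\U^{i})$; these are the summands in the statement.

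For the gradings, the Maslov and Alexander gradings of $a$ and of the $b_j$ are pinned down by the thin structure of $4_1$, and the gradings of the pattern generators $x_3,y_3$ are recorded in \cite{Petkova}; since both gradings are additive under the pairing up to the universal shift normalized by $\widehat{\HF}(S^3)=\F_{(0)}$, it follows that $a x_3$ and each $b_{p+i-1}y_3$ lie in Maslov grading $0$ and Alexander grading $0$. As an independent sanity check one can use that the $(p,1)$-cable of $4_1$ has vanishing $\tau$, together with the conjugation symmetry of $\HFKm$.

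The only genuine difficulty is bookkeeping: matching Petkova's generator names, and --- more importantly --- her cabling-parameter and framing conventions, to the $(p,1)$-cable as we use it, and then transporting the \emph{absolute} grading correctly through the pairing. This is precisely why \cite{Petkova} is invoked here rather than reading everything off the immersed-curve picture of Section~\ref{sec:cfkof41}, where only the differentials and the relative gradings are transparent.
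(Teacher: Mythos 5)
Your proposal is correct and takes essentially the same route as the paper: both extract the lemma from Petkova's bordered computation of $(p,1)$-cables of thin knots, identifying the stated summands by their generators $ax_3$ and $b_{p+i-1}y_3$ and computing the Maslov and Alexander gradings from her grading formulas, using $\tau(4_1)=0$ (so her shift constants vanish) to handle the convention bookkeeping you flag. The paper's proof differs only in minor housekeeping, e.g.\ noting that for each torsion order the two $\F[\U]/(\U^i)$ summands sit in opposite Maslov parity and that the listed generators are the even-graded ones.
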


\noindent Note that the table in the above lemma is not a complete description of $\HFKm$ of the cable, but it is sufficient for what we need.

\begin{proof}
Petkova \cite{Petkova} computes $\HFKm$ for  $(p, pn+1)$-cables of thin knots. In particular, given a $1 \times 1$ square in the knot Floer complex (equivalently, a ``figure-eight'' component as in the far left of Figure \ref{fig:immersedcurvecable}), she shows that $(p, 1)$-cabling produces two summands of the form $\F[\U]/(\U^i)$ for $2 \leq i \leq p-2$ and $i=p$, one summand of the form $\F[\U]/(\U^{p-1})$, and many summands of the form $\F[\U]/(\U)$. See Figure \ref{fig:Petkova} for a graphical depiction of the summands produced by cabling a $1 \times 1$ square.

\begin{figure}[htb!]
\begin{center}
\subfigure[]{
\begin{tikzpicture}[scale=1]
	\node at (0, 4) (a) {$b_1 y_4$};
	\node at (0, 2) (b) {$b_{2p-2} y_4$};
	\node at (2, 4) (c) {$a x_1$};
	\node at (2, 2) (d) {$b_1 y_2$};
	\node at (4, 4) (e) {$a x_2$};
	\node at (4, 2) (f) {$b_{2p-2}y_2$};
	\draw[->] (a) to node[left]{\lab{\U^{p-1}}} (b);
	\draw[->] (c) to node[left]{} (b);
	\draw[->] (c) to node[left]{\lab{\U}} (d);
	\draw[->] (c) to node[above]{\lab{\U^p}} (e);
	\draw[->] (d) to node[above]{\lab{\U^{p-1}}} (f);
	\draw[->] (e) to node[left]{} (f);
	
	\node at (6, 4) (i) {$b_i y_1$};	
	\node at (6, 2) (j) {$b_{2p-i-1} y_1$};
	\node at (8, 4) (k) {$b_{i+1} y_2$};
	\node at (8, 2) (l) {$b_{2p-i-2} y_2$};		
	\draw[->] (i) to node[left]{\lab{\U^{p-i}}} (j);	
	\draw[->] (i) to node[above]{\lab{\U}} (k);
	\draw[->] (j) to node[left]{} (l);
	\draw[->] (k) to node[left]{\lab{\U^{p-i-1}}} (l);

	\node at (0, 7) (g) {$ax_4$};
	\node at (0, 5) (h) {$ax_3$};
	\draw[->] (g) to node[left]{\lab{\U^p}} (h);
	
	\node at (2, 7) (m) {$b_i y_3$};
	\node at (2, 5) (n) {$b_{2p-i-1} y_3$};
	\draw[->] (m) to node[left]{\lab{\U^{p-i}}} (n);
	
	\node at (4, 7) (o) {$b_j y_4$};
	\node at (4, 5) (p) {$b_{2p-j-1} y_4$};
	\draw[->] (o) to node[left]{\lab{\U^{p-j}}} (p);
	
	\node at (6, 7) (q) {$b_{p-1} y_1$};
	\node at (6, 5) (r) {$b_{p} y_1$};
	\draw[->] (q) to node[left]{\lab{\U}} (r);
	
	\node at (8, 7) (s) {$b_{p-1} y_3$};
	\node at (8, 5) (t) {$b_{p} y_3$};
	\draw[->] (s) to node[left]{\lab{\U}} (t);
\end{tikzpicture}
}
\hspace{20pt}
\subfigure[]{
\begin{tikzpicture}[scale=1]
	\node at (0, 2) (a) {$b_1 y_4$};
	\node at (0, 0) (b) {$b_1 y_2$};	
	\draw[->] (a) to node[left]{\lab{\U^{p}}} (b);
	
	\node at (2, 2) (c) {$b_i y_1$};
	\node at (2, 0) (d) {$b_{i+1} y_2$};	
	\draw[->] (c) to node[left]{\lab{\U}} (d);
\end{tikzpicture}
}
\caption{Left, a graphical depiction of the differential in \cite[Section 5]{Petkova}. Right, simplifications of the bottom two summands from (a). Here, $1 \leq i \leq p-2$ and $2 \leq j \leq p-2$.}
\label{fig:Petkova}
\end{center}
\end{figure}
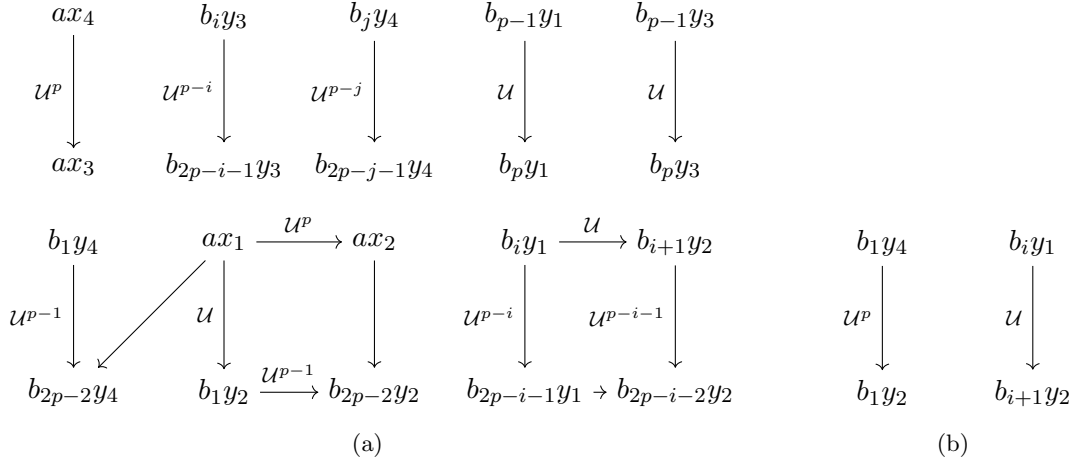

For $2 \leq i \leq p-2$ and $i=p$, one can check, using the gradings given in \cite[Section 5]{Petkova}, that the two $\F[\U]/(\U^i)$ summands are in opposite parity grading. For $2 \leq j \leq p$, we will consider the $\F[\U]/(\U^j)$ summand in even grading. These summands are generated by the elements listed in Lemma \ref{lem:Petkova}. Their Maslov and Alexander gradings are readily computed using the gradings given in \cite[Section 5]{Petkova}. (Since $\tau$ of the figure-eight knot is zero and we are considering $(p, 1)$-cables, Petkova's shifting constant $c$ is zero; hence Petkova's $A'$ is equal to $A$. Recall that the Maslov grading $M$ is equal to $N+2A$ in Petkova's notation and note that $l=n=t=\tau=0$.)
\end{proof}

We are now ready to prove Proposition \ref{prop:CFKcable}.

\begin{proof}[Proof of Proposition \ref{prop:CFKcable}]
We will use \cite[Theorem 1]{HW-cables}, which describes a way to compute the immersed curve associated to the $(p, q)$-cable of a knot $K$ in terms of a simple three step process:
\begin{enumerate}
	\item Draw $p$ copies of the immersed curve for $K$, each scaled vertically by a factor of $p$, staggered in height such that each copy of the curve is $q$ units lower than the previous copy.
	\item \label{it:HWstep2} Connect the loose ends of the successive copies of the curve.
	\item \label{it:HWstep3} Translate the pegs horizontally so that they lie on the same vertical line.
\end{enumerate}
We will apply this procedure to the immersed curve $\bm{\gamma}$ associated to the figure-eight knot. The immersed curve $\bm{\gamma} = \{\gamma_0, \gamma_1\}$ consists of a horizontal line $\gamma_0$ together with the ``figure-eight'' curve $\gamma_1$ depicted in the far left of Figure \ref{fig:immersedcurvecable}. We label the intersecting point of $\gamma_0$ and the vertical line as $a$. The horizontal line $\gamma_0$ is unaffected by $(2n-1, -1)$-cabling, so we will be interested in what $\gamma_1$ looks like after cabling. Note that $\gamma_1$ does not have loose ends, so we may skip step \eqref{it:HWstep2} above.

\begin{figure}[htb!]
\centering
\labellist
	\pinlabel {\lab{\gamma_{0,3}}} at  188 99
	\pinlabel {\lab{\gamma_{0,2}}} at  188 85
	\pinlabel {\lab{\gamma_{0,1}}} at  188 70
	\pinlabel {\lab{\gamma}} at 182 55
	\pinlabel {\lab{\gamma_{1,1}}} at  188 40
	\pinlabel {\lab{\gamma_{1,2}}} at  188 25
	\pinlabel {\lab{\gamma_{1,3}}} at  188 10

	\pinlabel {\lab{c_{1,3}}} at 254 128
	\pinlabel {\lab{b_{1,3}}} at 254 115
	\pinlabel {\lab{g_{1,3}}} at 254 19
	\pinlabel {\lab{e_{1,3}}} at 270 1

	\pinlabel {\lab{c_{1,2}}} at 341 143
	\pinlabel {\lab{f_{1,2}}} at 340 132
	\pinlabel {\lab{d_{1,2}}} at 341 122
	\pinlabel {\lab{b_{1,2}}} at 359 112
	\pinlabel {\lab{g_{1,2}}} at 342 34
	\pinlabel {\lab{e_{1,2}}} at 358 16
\endlabellist
\includegraphics{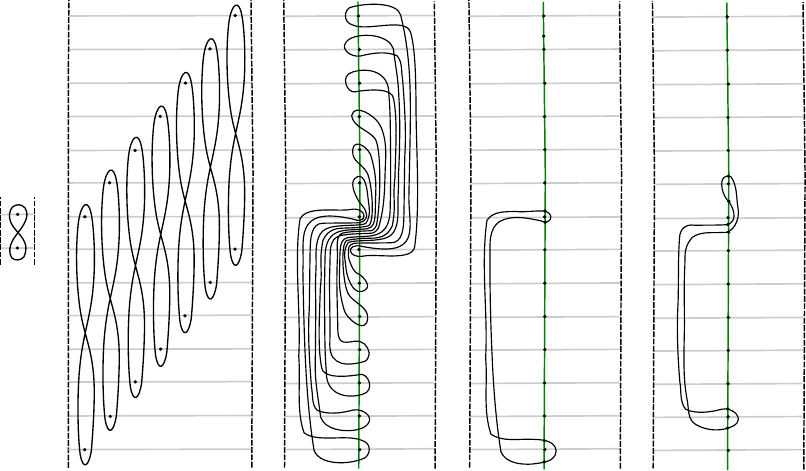}
\caption{Far left, the component $\gamma_1$ of the immersed multicurve associated to the figure-eight knot. Second from left, seven copies $\gamma_1$, each stretched by a factor of seven and staggered in height so that each copy is one unit higher than the previous copy. Center, the previous figure with the pegs translated horizontally so that they lie on the same vertical line. Second from right and far right, one component of the previous figure for clarity, with intersection points labeled; compare with  Figure \ref{subfig:CFKg} and Figure \ref{subfig:CFKf} respectively. In this example, $n=4$.}
\label{fig:immersedcurvecable}
\end{figure}

Fix $n \geq 2$ and let $p=2n-1$. The Hanselman-Watson algorithm is illustrated in Figure \ref{fig:immersedcurvecable} for $n=4$. We begin by drawing $p$ copies of $\gamma_1$, each scaled by a factor of $p$ and staggered in height such that each copy is one unit higher than the previous copy. We label these curves, from left to right, as
\[ \gamma_{1, n-1}, \; \gamma_{1, n-2}, \; \dots, \; \gamma_{1,1}, \; \gamma, \; \gamma_{0, 1}, \; \gamma_{0, 2}, \; \dots, \; \gamma_{0, n-1}. \]
We abuse notation and use the same labels for the resulting curves after applying step \eqref{it:HWstep3} above.

Recall that intersection points of the immersed curve with the vertical line correspond to generators of $\CFK_{\F[\U,\V]/(\U \V)}$ (which are the same as the generators of $\CFK_{\F[\U, \V]}$), and that segments of the curve to the left (respectively right) of $k$ marked points on the vertical line correspond to terms in the differential with coefficients $\V^k$  (respectively $\U^k$). See \cite[Section 4]{HRW:2018}, especially Proposition 47. (For a more general discussion, see also \cite{KWZ}.)

The curve $\gamma_{1, n-1}$ intersects the vertical line in four points, which we label from top to bottom as
\[ c_{1, n-1}, \; b_{1, n-1}, \; g_{1, n-1}, \; e_{1, n-1}. \]
(The labeling is chosen to match the statement of Proposition \ref{prop:CFKcable}.) There is a bigon from $b_{1, n-1}$ to $c_{1, n-1}$ to the right of one marked point on the vertical line, and a bigon from $b_{1, n-1}$ to $e_{1, n-1}$ to the left of $2n-1$ marked points on the vertical line, indicating that
\[ \d b_{1, n-1} = \U c_{1, n-1} + \V^{2n-1} e_{1, n-1} \mod (\U \V). \]
Similarly, we see that
\[ \d c_{1, n-1} = \V g_{1, n-1} \mod (\U \V)\qquad \textup{ and } \qquad \d e_{1, n-1} = \U g_{1, n-1} \mod (\U \V). \]

Each of the curves $\gamma_{1,i}$, $1 \leq i \leq n-2$, intersects the vertical line in six points, which we label from top to bottom as
\[ c_{1,i}, \; f_{1,i}, \; d_{1,i}, \; b_{1,i}, \; g_{1,i}, \; e_{1,i}.\]
The curves $\gamma$, $\gamma_{0, i}$, $1 \leq i \leq n-2$, and $\gamma_{0, n-1}$ are labeled analogously, with the corresponding subscript. It is straightforward to see that modulo $(\U \V)$, the differential is as given in the statement of Proposition \ref{prop:CFKcable}. The generator $a$ corresponds to the horizontal line $\gamma_0$.

The parity of the Maslov grading of each generator is readily determined by the fact that knot Floer homology categorifies the Alexander polynomial and the fact that $a$ is in Maslov grading zero. The generators $b, f, g$ (with any possible subscript) are all in even Maslov grading, and the generators $c, d, e$ (again with any possible subscript) are all in odd Maslov grading.

We now compare our answer with Lemma \ref{lem:Petkova}. Note that the present proposition studies the $(p, -1)$-cable of the figure-eight knot, while Lemma \ref{lem:Petkova} studies the $(p,1)$-cable. Recall that $\CFK_{\F[\U, \V]}(-K) = \CFK_{\F[\U, \V]}(K)^*$, where the dual is taken over the ground ring. (The same statement holds over $\F[\U, \V]/(\U \V)$.) A basis for $ \CFK_{\F[\U, \V]}(K)$ naturally induces a basis for $\CFK_{\F[\U, \V]}(K)^*$, where if $x$ is basis element for $\CFK_{\F[\U, \V]}(K)$ in grading $(\gr_\U, \gr_\V)$, then $x^*$ has grading $(-\gr_\U, -\gr_\V)$.

Recall that $\HFKm(K) = H_*(\CFK_{\F[\U, \V]}(K) / (\V))$. Using immersed curves, we have computed the mod $(\U\V)$ differential; hence we have computed $\HFKm$ using immersed curves. Dualizing, we see that the generators in even Maslov grading of summands of $\HFKm$ of the form $\F[\U]/(\U^i)$ for $2 \leq i \leq p$ are:
\begin{center}
\begin{tabular}{*{12}{@{\hspace{10pt}}c}}
\hline
& \textup{summand} && \textup{generator} \\
\hline
& $\F[\U]/(\U^{n+j})$ && $b^*_{0, j}$ \\
& $\F[\U]/(\U^{n})$ && $b^*$ \\
& $\F[\U]/(\U^{n-k})$ && $b^*_{1, k}$ \\
\hline
\end{tabular}
\end{center}
where $1 \leq j \leq n-1$ and $1 \leq k \leq n-2$. (Recall that $p=2n-1$.) Since these are the unique generators of such summands in even Maslov grading, by comparing with the table in Lemma \ref{lem:Petkova}, we deduce the following identifications:
\begin{align*}
ax_3 &\leftrightarrow b^*_{0,n-1} \\
b_{p+j-1} y_3 &\leftrightarrow b^*_{0, j-n} \\
b_{p+n-1} y_3 &\leftrightarrow b^* \\
b_{p+k-1} y_3 &\leftrightarrow b^*_{1, -k+n}.
\end{align*}
where $n+1 \leq j \leq 2n-2$ and $2 \leq k \leq n-1$. (In fact, since all of the gradings in the table in Lemma \ref{lem:Petkova} are identical, the precise identifications do not matter; we list them for the sake of completeness.) It follows that that $b^*_{0,j}, b^*$, and $b^*_{1,k}$ all have $\gr_\U=\gr_\V=0$, hence so do $b_{0,j}, b$, and $b_{1,k}$ for $1 \leq j \leq n-1$ and $1 \leq k \leq n-2$. By symmetry of the knot Floer complex, it follows that $b_{1, n-1}$ also has $\gr_\U=\gr_\V=0$. Knowing the absolute gradings for all of the $b$-type generators (with any subscript, including the empty subscript) determines all of the absolute gradings, since every summand (other than $a$) contains a $b$-type generator.

We have described the differential modulo $\U\V$ and the absolute gradings. We now use the fact that $\d^2 = 0$ to deduce the differential over the ring $\F[\U, \V]$. 

We first consider the generator $b$. We have
\begin{align*}
	\d b &= \U^n c + \V^n e \mod (\U\V) \\
	\d c &= \V f \mod (\U\V) \\
	\d e &= \U g \mod (\U\V).
\end{align*}
In order for $\d^2 b = 0$ over $\F[\U, \V]$, we need to cancel the terms $\U^n \V f$ and $\U \V^n g$ that appear when we na\"ively lift from $\F[\U, \V]/(\U\V)$ to $\F[\U, \V]$ by adding diagonal arrows to our complex, since taking a quotient mod $(\U\V)$ is equivalent to ignoring all diagonal arrows. We first consider the term $\U^n \V f$. Since the exponent on $\V$ is one, it follows that the term that cancels $\U^n \V f$ must arise from a path from $b$ to $\U^n \V f$ consisting of one diagonal arrow and one horizontal arrow. By inspection on the gradings of generators, we see that any diagonal arrow starting from $b$ can only go to generators $\U\V d$ and $\U\V d_{i,j}$ for $i=0,1$, $j=1,\dots,n-2$. However the boundaries $\d d, \d d_{0,1},\dots,\d d_{0,n-2},\d d_{1,1},\dots,\d d_{1,n-2}$ are clearly $\F[\U,\V]$-linearly independent. Thus the only option is to make
\[ \d b = \U^n c + \V^n e + \U \V d. \]
This also cancels the term $\U \V^n g$ in $\d^2 b$.

Similarly, we can conclude that
\begin{align*}
	\d b_{0,i} &= \U^{n+i}c_{i,j} + \V^{n-i}e_{0,i} + \U \V d_{0,i} \\
	\d b_{1,i} &= \U^{n-i} c_{1,i} + \V^{n+i}e_{1,i} + \U\V d_{1,i}
\end{align*}
for $1 \leq i \leq n-2$. Furthermore, it is a straightforward exercise to verify that gradings and $\d^2=0$ obstruct the existence of any other diagonal arrows (up to a possible change of basis). This concludes the proof of the proposition.
\end{proof}

\section{Involutive knot Floer homology and cables of the figure-eight knot}\label{sec:involutiveof41}
Let $K_n$ denote $(2n-1, -1)$-cable of the figure-eight knot for $n \geq 2$. The goal of this section is to show that the knots $K_n$ are linearly independent in the concordance group. We will compute part of $\iota_K$ for the knots $K_n$. Combined with the notion of almost local equivalence, this partial computation will be sufficient to show linear independence.

We begin with the partial computation of $\iota_K$ for the knots $K_n$.

\begin{lemma}\label{lem:iotaKcable}
With notation as in Proposition \ref{prop:CFKcable}, we have
\begin{align*}
	\iota_K(a) & = a \mod (\U, \V) \\
	\iota_K(b) & = b+a \mod (\U, \V) \\
	\iota_K(f) & = g \mod (\U, \V) \\
	\iota_K(g) & = f \mod (\U, \V).	
\end{align*}
\end{lemma}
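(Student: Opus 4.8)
The plan is to pin down $\iota_K$ modulo $(\U,\V)$ by combining (a) the skew-grading and skew-equivariance constraints of $\iota_K$, (b) the relation $\iota_K^2 \simeq 1 + \Psi\Phi$ reduced mod $(\U,\V)$, and (c) the fact that $\iota_K$ must be a chain map, applied to the very explicit complex of Proposition~\ref{prop:CFKcable}. First I would record the grading data: since $\iota_K$ is skew-graded, it sends a generator with bigrading $(\gr_\U,\gr_\V)$ to a sum of generators with bigrading $(\gr_\V,\gr_\U)$ (after absorbing powers of $\U,\V$, but mod $(\U,\V)$ only the generators in exactly the swapped bigrading survive). From the table, $a$ sits in bigrading $(0,0)$, and the only other generators in bigrading $(0,0)$ are the various $b$-type generators; $f$ has bigrading $(2n-2,0)$ while $g$ has bigrading $(0,2n-2)$, and inspection of the table shows these are the unique generators in those two (swapped) bigradings. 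So mod $(\U,\V)$ we immediately get $\iota_K(f) \in \{0,g\}$, $\iota_K(g)\in\{0,f\}$, and $\iota_K(a),\iota_K(b)$ are $\F$-linear combinations of the $b$-type generators and $a$.

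Next I would use the chain-map and involutivity constraints to eliminate the remaining ambiguity. Mod $(\U,\V)$ the differential on the connected-sum-irrelevant piece $\{b,c,d,e,f,g\}$ vanishes (the complex is reduced), so being a chain map mod $(\U,\V)$ is automatic; the real leverage comes from $\iota_K^2 \simeq 1 + \Psi\Phi \mod (\U,\V)$. From the figure-eight-style square $b \to \U^n c + \U\V d + \V^n e$, $c\to \V f$, $e \to \U g$, one computes $\Phi$ and $\Psi$ on these generators (as in the worked example for $4_1$): $\Phi$ picks out the $\U$-differential and $\Psi$ the $\V$-differential, giving $\Psi\Phi(b) = $ (a multiple of) the ``bottom'' generator of the square — here one checks $\Psi\Phi(b)=0 \mod (\U,\V)$ for the length-$\geq 2$ arrows but $\Psi\Phi$ is nonzero on the sub-square governed by the $\U\V d$ arrow, yielding a contribution that forces $\iota_K^2$ to differ from the identity on the relevant generators. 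Tracking this through, the constraint $\iota_K(\iota_K(b)) = b + (\text{correction})$ together with the $(0,0)$-bigrading restriction forces $\iota_K(b) = b + a \mod (\U,\V)$ rather than $\iota_K(b)=b$; and since $\iota_K^2$ must then be consistent, $\iota_K(a)=a \mod (\U,\V)$. For $f,g$: the relation $\iota_K^2(f) = f + \Psi\Phi(f) = f \mod (\U,\V)$ (as $f$ is a cycle not hit by the relevant square in a way that survives), combined with $\iota_K$ being a nonzero map on homology localized away from $(\U,\V)$, rules out $\iota_K(f)=0$, giving $\iota_K(f)=g$ and symmetrically $\iota_K(g)=f$.

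The step I expect to be the main obstacle is justifying that $\iota_K(b)$ has a nonzero $a$-component, i.e.\ distinguishing $\iota_K(b) = b+a$ from $\iota_K(b)=b$ mod $(\U,\V)$. Pure grading considerations cannot do this, and $\iota_K^2 \simeq 1 + \Psi\Phi$ only constrains $\iota_K$ up to homotopy, so I would need to argue this using a structural input: either (i) the known behavior of $\iota_K$ on the figure-eight knot itself (where $\iota_{4_1}$ is computed by Hendricks--Manolescu / Hendricks--Hom--Lidman and swaps $c \leftrightarrow d$ and sends $b \mapsto b+a$ up to homotopy, reflecting the fact that $\HFI$ of the figure-eight is nontrivial), pulled back through the cabling, or (ii) a direct argument that the almost $\iota_K$-complex would otherwise be trivial — contradicting the asymmetry the authors flag in the Remark after Theorem~\ref{thm:main}. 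Likely the cleanest route is to invoke the connected-sum formula or the behavior of $\iota_K$ under cabling to reduce to the figure-eight computation: the $a$ and $b$ generators are precisely the images of the generators of $\CFK(4_1)$ under the inclusion coming from the innermost square, and $\iota_K$ is natural enough that its action on them is inherited, forcing $\iota_K(b)=b+a \mod(\U,\V)$. I would carry out the grading bookkeeping first, then set up $\Phi,\Psi$ on the square, then invoke the figure-eight computation for the $a,b$ piece and close with the involutivity relation for $f,g$.
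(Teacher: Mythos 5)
Your treatment of $f$, $g$, and $a$ is essentially the paper's argument (uniqueness of the generators in bigradings $(2n-2,0)$ and $(0,2n-2)$, skew-gradedness, and the relation $\iota_K^2 \simeq 1+\Psi\Phi$ reduced modulo $(\U,\V)$), and that part is fine. The genuine gap is exactly the step you flag: showing that $\iota_K(b)$ has a nonzero $a$-component. Your middle paragraph asserts that $\Psi\Phi$ being nonzero on the square ``forces'' $\iota_K(b)=b+a$ mod $(\U,\V)$, but this is false as stated: one computes $\Psi\Phi(b)=\U^{n-1}f$ or $\V^{n-1}g$ (depending on parity), and since $n\geq 2$ these lie in the ideal $(\U,\V)$. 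Hence modulo $(\U,\V)$ the relation $\iota_K^2=1+\Psi\Phi$ is satisfied both by $\iota_K(b)=b$ and by $\iota_K(b)=b+a$ (in the latter case $\iota_K^2(b)=b+a+a=b$), so no amount of mod-$(\U,\V)$ bookkeeping, gradings, or the almost-complex involutivity relation can distinguish them. The fallback routes you propose also do not work: there is no known naturality or cabling formula computing $\iota_K$ of a cable from $\iota_K$ of the companion (the absence of such a formula is the reason this lemma requires an argument at all), the generators $a,b,c,d,e,f,g$ of $\CFK_{\F[\U,\V]}(K_n)$ do not form a copy of $\CFK_{\F[\U,\V]}(4_1)$ (the arrow lengths are $n$ and $n-1$, not $1$), and appealing to the asymmetry mentioned in the remark after Theorem~\ref{thm:main} would be circular, since that asymmetry is a consequence of this very computation.

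What the paper actually does at this point is finer than anything available mod $(\U,\V)$. First, using skew-gradedness, skew-equivariance, and chain-map-ness over the full ring $\F[\U,\V]$, it lists all possible terms in $\iota_K(b)$ and shows the $f$- and $g$-type tails can be removed by explicit skew-equivariant homotopies, leaving four candidates: $b$, $b+\U^{n-1}f$, $b+a$, $b+a+\U^{n-1}f$. Then it rules out the first two by an obstruction argument carried out modulo the larger quotient $(\U^n,\V^n,\U\V)$ rather than $(\U,\V)$: if $\iota_K(b)=b$ (or $b+\U^{n-1}f$), the homotopy $H$ realizing $\iota_K^2\simeq 1+\Psi\Phi$ would have to satisfy $\d H(b)+H\d(b)=\U^{n-1}f$; but $\d b=\U^n c+\U\V d+\V^n e\equiv 0$ modulo $(\U^n,\V^n,\U\V)$ and $H$ is $\F[\U,\V]$-equivariant, so the left side reduces to $\d H(b)$, while $\U^{n-1}f\notin\im\d$ in that quotient --- a contradiction. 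This use of the intermediate ideal, where $\Psi\Phi(b)$ survives but $\d b$ dies, is the missing idea in your proposal; without it (or some substitute), the claim $\iota_K(b)=b+a \mod (\U,\V)$ is not established.
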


\begin{proof}
We first show that $\iota_K(f) = g \mod (\U, \V)$ and $\iota_K(g) = f \mod (\U, \V)$. By Lemma \ref{lem:homotopyreduced}, we have that $\iota_K^2(f) = f \mod (\U, \V)$. Note that $f$ is the unique basis element in grading $(2n-2, 0)$ and $g$ is the unique basis element in grading $(0, 2n-2)$. Since $\iota_K$ is skew-graded, it follows that $\iota_K(f) = g \mod (\U, \V)$ and $\iota_K(g) = f \mod (\U, \V)$.

We next show that $\iota_K(a) = a \mod (\U, \V)$. 
Notice that any cycle besides $a$ in grading $(0, 0)$ lies in the ideal $(\U, \V)$. Since $\iota_K$ is a skew-graded chain map, it follows that $\iota_K(a) = a \mod (\U, \V)$.

We now show that $\iota_K(b) = b+a \mod (\U, \V)$. 
The generator $b$ also has grading $(0,0)$, so we know that $\iota_K(b)$ is an $\F$-linear combination of 
\[ a, b, b_{0,1},\dots,b_{0,n-1},b_{1,1},\dots,b_{1,n-1},\] 
\[ \U f_{1,n-2}, \U^2 f_{1, n-3}, \dots, \U^{n-2} f_{1,1}, \U^{n-1}f, \U^n f_{0, 1}, \U^{n+1} f_{0, 2}, \dots, \U^{2n-2} f_{0, n-1},\]
\[ \V g_{0, n-2}, \V^2 g_{0, n-3}, \dots, \V^{n-2} g_{0,1}, \V^{n-1} g, \V^n g_{1, 1}, \V^{n+1} g_{1, 2}, \dots, \V^{2n-2} g_{1, n-1}. \]
We claim that terms $b_{i,j}$ cannot appear in $\iota_K(b)$. Since $\iota_K$ is a $\F[\U, \V]$-skew-equivariant, skew-graded chain map, we have $\d \iota_K(b)=\iota_K(\d b)=0 \mod (\U^n,\U\V,\V^n)$. However, if we consider the integer
\[ k=\min\{j \mid \text{either }b_{0,j}\text{ or }b_{1,j}\text{ appears in }\iota_K(b)\}, \]
 then we have $\d\iota_K(b)\ne 0 \mod (\U^{n-k+1},\U\V,\V^{n-k+1})$, a contradiction. This proves our claim.

We will show that, up to homotopy, we may ignore any terms in $\iota_K(b)$ containing $f_{i,j}$ or $g_{i,j}$.

We begin by showing that terms of the form $\U^{n+i-1}f_{0, i}$, $1 \leq i \leq n-1$, can be homotoped away. Consider $\iota_K$ and $\iota_K'$ such that $\iota_K+\iota_K'$ is zero on all basis elements except for $b$ where $(\iota_K+\iota_K')(b) = \U^{n+i-1} f_{0, i}$, for some $1 \leq i \leq n-1$. We will find a skew-homotopy $H$ such that
\begin{equation}\label{eq:iotaKhomotopy}
	\iota_K + \iota_K' = \d H + H \d.
\end{equation}
Let $H(e) = \U^{i-1}f_{0,i}$ and $H$ of any other basis element be zero. Then it is straightforward to verify that
\[ \d H (b) + H \d (b) = H(\U^n c + \U \V d + \V^n e) = \U^n H(e) = \U^{n+i-1} f_{0,i}. \]
Since $f_{i,j}$ is a cycle and no multiple of $e$ occurs in the boundary of any basis element besides $b$, it is clear that both sides of \eqref{eq:iotaKhomotopy} are zero for all basis elements other than $b$.

We now show that terms of the form $\U^{n+i-1} f_{0, i} + \V^{n-i-1}g_{0,i}$, $1 \leq i \leq n-2$, can be homotoped away. Consider $\iota_K$ and $\iota_K'$ such that $\iota_K+\iota_K'$ is zero on all basis elements except for $b$ where $(\iota_K+\iota_K')(b) = \U^{n+i-1} f_{0, i} + \V^{n-i-1}g_{0,i}$, for some $1 \leq i \leq n-2$ We will find a skew-homotopy $H$ such that
$$\iota_K + \iota_K' = \d H + H \d.$$
Let $H(b) = d_{0,i}$ and $H$ of any other basis element be zero. It is straightforward to verify that $H$ has the desired properties. The same argument applies to show that the following terms can be homotoped away:
\begin{align}
	\label{eq:dfghom} &\U^{n-1} f + \V^{n-1} g,  \textup{ via } H(b) = d, \\
	&\V^{n+i-1}g_{1, i}, \textup{ for } 1 \leq i \leq n-1, \textup{ via }H(c) = \V^{i-1}g_{1,i}, \nonumber\\
	&\U^{n-i-1} f_{1,i} + \V^{n+i-1} g_{1,i}, \textup{ for } 1 \leq i \leq n-2, \textup{ via } H(b) = d_{1,i}.\nonumber
\end{align}
Thus, up to homotopy, there are four possibilities for $\iota_K(b)$;
\begin{enumerate}
	\item $\iota_K(b) = b$,
	\item $\iota_K(b) = b + \U^{n-1} f$,
	\item $\iota_K(b) = b+a$,
	\item $\iota_K(b) = b + a + \U^{n-1}f$.
\end{enumerate}
Note that the possibilities $\iota_K(b)=0, U^{n-1}f, a,a+U^{n-1}f$ are ruled out. This is because the first two would imply that $b=\iota_K^4 (b)=\iota_K^3 (\iota_K(b))=0 \mod (\U,\V)$ and the last two would imply that $b+a=\iota_K^3(\iota_K(b)+a)=0 \mod (\U,\V)$.

Recall that $\iota_K^2 \simeq 1 + \Psi \Phi$. If $n$ is odd, then $(1 + \Psi \Phi)(b) = b + \U^{n-1} f$, and if $n$ is even, then $(1 + \Psi \Phi)(b) = b + \V^{n-1} g$. Note that \eqref{eq:dfghom} shows that these are homotopic, so without loss of generality, we may assume that $(1 + \Psi \Phi)(b) = b + \U^{n-1} f$.

Suppose that $\iota_K(b) = b$. Then $\iota_K^2(b) = b$. We claim that there does not exist a homotopy $H$ such that 
\[ \d H (b) + H \d (b) = \iota_K^2(b) + (1 + \Psi \Phi)(b) = \U^{n-1} f. \]
Consider the above equation modulo the ideal $(\U^n, \V^n, \U \V)$. Since $\d b = \U^n c +\U\V d +\V^n e$ and $H$ is $\F[U, \V]$-equivariant, the left hand side of the above equation reduces to $\d H(b)$. But $\U^{n-1}f \notin \im \d$, a contradiction. Hence $\iota_K(b) \neq b$.

Now suppose that $\iota_K(b) = b + \U^{n-1} f$. Since $\iota_K(f) = g \mod (\U, \V)$, it follows that
\begin{align*}
	\iota_K^2(b) &= b + \U^{n-1} f + \iota_K(\U^{n-1} f) \\
		&=  b + \U^{n-1} f + \V^{n-1} g \mod (\U^n, \V^n, \U \V).
\end{align*}
By \eqref{eq:dfghom}, up to homotopy, we may ignore the term $\U^{n-1} f + \V^{n-1} g$. Now apply the same argument as in the $\iota_K(b) = b$ case. Thus, $\iota_K(b) \neq b+ \U^{n-1}f$.

Therefore, we have that $\iota_K(b) = b+a$ or $\iota_K(b) = b + a + \U^{n-1}f$. In either case, 
\[ \iota_K(b) = b + a \mod (\U, \V), \]
as desired.
\end{proof}

We now proceed to prove Theorem \ref{thm:main} by showing linear independence of the almost $\iota_K$-complexes of the knots $K_n$.


\begin{proof}[Proof of Theorem \ref{thm:main}]
Let $C_n$ denote the almost $\iota_K$-complex of $K_n$.  For simplicity, we will write the involution $\iota_{K_n}$ on $C_n$ by $\iota_K$. By Lemma~\ref{lem:concordantalmostequiv}, it is enough to show that if we have
\begin{equation}\label{eqn:otimessim}
	\bigotimes_{i=1}^M C_{m_i}^{\otimes a_i} \sim \bigotimes_{i=1}^N C_{n_i}^{\otimes b_i},
\end{equation}
where $m_i, n_i \geq 2, m_i > m_{i+1}, n_i > n_{i+1}, a_i, b_i \geq 1$, 
then the left hand side and the right hand side agree, i.e. $M=N$ and $a_i=b_i, m_i=n_i$ for all $i$. Here, $(C_1, \iota_1)\sim (C_2, \iota_2)$ means that the two almost $\iota_K$-complexes are almost locally equivalent. 

Before beginning the proof, we provide the following outline of our strategy:
\begin{enumerate}
	\item Suppose that the left and right hand sides of \eqref{eqn:otimessim} are almost locally equivalent via an almost local equivalence $f$.
	\item Use the fact that $f$ is an isomorphism on $H_*((\U, \V)^{-1} C_i)$ to partially determine $f(a \otimes a \otimes \dots \otimes a) \mod (\U, \V)$.
	\item Use the preceding step and fact that $\omega f = f \omega \mod (\U, \V)$ to partially determine $f(b \otimes a \otimes a \otimes \dots \otimes a)$.
	\item Use the preceding step and the fact that $f$ is a chain map to reach a contradiction, showing that $f$ cannot exist.
\end{enumerate}

Without loss of generality, suppose that $m_1 > n_1$, $M \geq 1$, and $N \geq 0$. We will show that there is no almost local map
\[ f \co \bigotimes_{i=1}^M C_{m_i}^{\otimes a_i} \to \bigotimes_{i=1}^N C_{n_i}^{\otimes b_i}. \]
More precisely, we will show that if $f$ induces an isomorphism on localized homology and  $f \iota_K \skewsimeq \iota_K f \mod (\U, \V)$, then $f$ cannot be a chain map.
Suppose such $f$ exists, then since $a\otimes a\otimes\dots\otimes a$ is a cycle in $\bigotimes_{i=1}^M C_{m_i}^{\otimes a_i}$ which generates $H_{\ast}((\U,\V)^{-1} \bigotimes_{i=1}^M C_{m_i}^{\otimes a_i})$, $f(a\otimes a\otimes\dots\otimes a)$ is also a cycle in $\bigotimes_{i=1}^N C_{n_i}^{\otimes b_i}$ which generates $H_{\ast}((\U,\V)^{-1}\bigotimes_{i=1}^N C_{n_i}^{\otimes b_i})$. We claim that
\[ \langle f(a\otimes a\otimes\dots\otimes a),a\otimes a\otimes\dots\otimes a\rangle =1. \]
 Here, the left hand side denotes the coefficient of $a\otimes a\otimes\dots\otimes a$ when we express $f(a\otimes a\otimes\dots\otimes a)$ as an $\F[\U,\V]$-linear combination of elements of the form $\otimes x_i$, where each $x_i$ is a basis element in Proposition \ref{prop:CFKcable}. 

To prove the claim, suppose that $\langle f(a\otimes a\otimes\dots\otimes a),a\otimes a\otimes\dots\otimes a\rangle =0$ and consider the chain map $g_i :C_{n_i}\rightarrow \mathbb{F}[\U,\V]$ which maps $a$ to $1$ and all other generators to $0$. Then $g_i$ is an $\mathbb{F}[\U,\V]$-linear chain map and $(\U,\V)^{-1}g_i$ is a quasi-isomorphism (although $g_i$ is not an almost local map), so $g=\bigotimes_{i=1}^{N} g_{i}^{\otimes b_i}$ is also a $\mathbb{F}[\U,\V]$-linear chain map and $(\U,\V)^{-1}g$ is a quasi-isomorphism. Since $f(a\otimes a\otimes\dots\otimes a)$ does not contain the term $a\otimes a\otimes\cdots\otimes a$, we deduce that $g(f(a\otimes a\otimes\dots\otimes a))=0$, which is a contradiction since $f(a\otimes a\otimes\dots\otimes a)$ generates the homology of $(\U,\V)^{-1}\bigotimes_{i=1}^{N} g_{i}^{\otimes b_i}$. So our claim is proven.

Now consider $b \otimes a \otimes a \otimes \dots \otimes a$. Let $\omega = 1 +\iota_K$. We have that
\begin{align*} 
	\omega (b \otimes a \otimes a \otimes \dots \otimes a) &= (1 + \iota_K)(b \otimes a \otimes a \otimes \dots \otimes a)  \\
		&= b \otimes a \otimes a \otimes \dots \otimes a + \iota_K(b \otimes a \otimes a \otimes \dots \otimes a) \\
		&= b \otimes a \otimes a \otimes \dots \otimes a + (b+a)\otimes  a \otimes a \otimes \dots \otimes a \mod (\U, \V) \\
		&= a\otimes a\otimes \dots\otimes a \mod (\U,\V)
\end{align*}
where the third equality uses Lemma \ref{lem:iotaKcable} and the fact that $\Psi(a) = 0$; note that the connected sum formula for involutive knot Floer homology is given by $\iota_{K_1 \sharp K_2}=(\id \otimes \id + \Phi\otimes \Psi)\circ (\iota_{K_1}\otimes \iota_{K_2})$. Hence

\begin{align} 
		\langle \omega f (b \otimes a \otimes a \otimes \dots \otimes a),a\otimes a\otimes\dots\otimes a \rangle
		&= \langle f \omega (b \otimes a \otimes a \otimes \dots \otimes a),a\otimes a\otimes\dots\otimes a \rangle \nonumber\\
		&= \langle f(a \otimes a \otimes \dots \otimes a), a\otimes a\otimes\dots\otimes a\rangle \nonumber\\
		&= 1,  \label{eq:omegaaaa} 
\end{align}
where the first equality follows from the fact that our complexes are reduced. This proves the case when $N=0$, since $1+\iota_{\text{unknot}}$ is the zero map. Thus, from now on, we will assume that $N>0$.


We claim that $f (b \otimes a \otimes a \otimes \dots \otimes a)$ must contain at least one element of the form
\[ \bfx = \bigotimes_i x_i \]
where each $x_i$ is either $a, b$, or $b_{j, k}$ for some $j, k$ and at least one $x_i$ is not $a$, when we represent it in terms of linear combinations of tensor products of basis elements of each $C_{n_i}$. (From now on, we will reserve the notation $\bfx$ for elements of this form.) To prove the claim, suppose that $f(b\otimes a\otimes\dots\otimes a)$ does not contain any element of the form $\bfx$. Since any generator $z$ of each $C_{n_i}$ satisfies $\gr_{\U}(z)+\gr_{\V}(z)\ge 0$, and the equality is satisfied only when $z=a,b,b_{j,k},e_{0,n-1},c_{1,n-1}$, we can write $f(b\otimes a\otimes\dots\otimes a)$ mod $(\U,\V)$ as a linear combination of elements of the form $\bfz=\bigotimes_{i} z_i$ where $z_i$ is either $a,b,b_{j,k},e_{0,n-1},c_{1,n-1}$, and at least one of $z_i$ is either $e_{0,n-1}$ or $c_{1,n-1}$.  But since the only generators of $C_{n_i}$ that lie in bigrading $(1,-1)$ and $(-1,1)$ are $c_{1,n-1}$ and $e_{0,n-1}$, respectively, we must have $\iota_K(c_{1,n-1})=e_{0,n-1} \mod (\U,\V)$ and $\iota_K(e_{0,n-1})=c_{1,n-1} \mod (\U,\V)$. Hence we see that $\omega f(b\otimes a\otimes\dots\otimes a)$ cannot contain the term $a\otimes a\otimes\cdots\otimes a$. This contradicts \ref{eq:omegaaaa}, so our claim is proven.

We now use the fact that $f$ is a chain map. We have that $\d \bfx$ is a linear combination of elements of the form
\[ \bfy = \bigotimes_i y_i \]
where 
all but one $y_i$ is
\[ a, \; b, \; b_{j,k} \]
and exactly one $y_i$ is 
\[ \U^\ell c, \; \U \V d, \; \V^\ell e, \; \U^\ell c_{j,k}, \; \U \V d_{j,k}, \; \V^\ell e_{j,k}. \]
From now on, we will reserve the notation $\bfy$ for elements of this form.

We now make the following two observations:
\begin{enumerate}
	\item \label{it:bfy} For a fixed element $\bfy$, there is a unique element $\bfx$ such that $\bfy$ appears in $\d \bfx$. More precisely, 
	\begin{enumerate}
		\item if $y_i = a$, then $x_i = a$,
		\item if $y_i = b$, then $x_i = b$,
		\item if $y_i = b_{j,k}$, then $x_i = b_{j,k}$,
		\item if $y_i = \U^\ell c, \; \U \V d$, or $\V^\ell e$, then $x_i = b$,
		\item if $y_i = \U^\ell c_{j,k}, \; \U \V d_{j,k}$, or $\V^\ell e_{j,k}$, then $x_i = b_{j,k}$.
	\end{enumerate}
	\item \label{it:bfz} If $\bfz$ is not of the form $\bfx$, then $\bfy$ does not appear in $\d \bfz$.
\end{enumerate}

Note that 
\[ \d (b \otimes a \otimes a \otimes \dots \otimes a) = 0 \mod (\U^{m_1}, \V^{m_1}, \U \V). \]
We have that $f(b \otimes a \otimes a \otimes \dots \otimes a)$ contains at least one element of the form $\bfx$. It is straightforward to verify that for any element of the form $\bfx$
\[ \d \bfx \neq 0 \mod (\U^{m_1}, \V^{m_1}, \U \V), \]
since $m_1 > n_i$ for all $i$; namely, for any $b_{j,k}$ (respectively $b$) appearing in $\bfx$, there is at least one term in $\d b_{j,k}$ (respectively $\d b$) of the form $\U^\ell c_{j,k}$ or $\V^\ell e_{j,k}$ (respectively $\U^\ell c$ or $\V^\ell e$) for some $\ell < m_1$.
Furthermore, it follows from items \eqref{it:bfy} and \eqref{it:bfz} above that the differential of no other term appearing in $f(b \otimes a \otimes a \otimes \dots \otimes a)$ can cancel the non-vanishing terms in $\d \bfx \mod (\U^{m_1}, \V^{m_1}, \U \V)$.

It follows that $f$ cannot be a chain map. Hence no almost local map 
\[ f \co \bigotimes_{i=1}^M C_{m_i}^{\otimes a_i} \to \bigotimes_{i=1}^N C_{n_i}^{\otimes b_i} \]
can exist, and thus
\[ \bigotimes_{i=1}^M C_{m_i}^{\otimes a_i} \qquad \textup{ and } \qquad \bigotimes_{i=1}^N C_{n_i}^{\otimes b_i} \]
are not almost locally equivalent. It follows that the knots $K_n$, $n \geq 2$, are linearly independent.
\end{proof}


\section{The concordance unknotting number and cables of the figure-eight knot}\label{sec:proofofcor}

The goal in this section is to prove Corollary~\ref{cor:concordanceunknotting}. The proof will rely on the notion of the connected $\iota_K$-complex of a knot, which imports the ideas of \cite{HHL} (which deal with $\iota$-complexes of rational homology spheres) to the setting of $\iota_K$-complexes of knots. The arguments below are essentially identical to those in \cite[Section 3]{HHL} (see also \cite[Section 6]{Zhou}); we include them here for completeness.

\begin{definition}
Let $(C, \iota)$ be an (almost) $\iota_K$-complex. A \emph{(almost) self-local equivalence $f$} is an (almost) local equivalence
\[ f \co C \to C.\]
\end{definition}

We define a pre-order $\lesssim$ (that is, a reflexive and transitive binary relation) on the set of (almost) self-local equivalences. Let $f, g$ be (almost) self-local equivalences of an (almost) $\iota_K$-complex $(C, \iota)$. We say that $f \lesssim g$ if $\ker f \subseteq \ker g$. A (almost) self-local equivalence $f$ is \emph{maximal} if $f \lesssim g$ implies $\ker f = \ker g$.

Maximal (almost) self-local equivalences have the following useful properties:

\begin{lemma}\label{lem:inj}
Let $f$ be a maximal (almost) self-local equivalence of $(\CFK_{\F[\U, \V]}(K), \iota_K)$ and let 
\[ g \co \CFK_{\F[\U, \V]}(K) \to \CFK_{\F[\U, \V]}(K) \]
be a (almost) local map. Then $g|_{\im f}$ is injective.
\end{lemma}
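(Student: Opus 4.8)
The plan is to mimic the proof of the analogous statement in the Heegaard Floer setting of \cite{HHL} (Lemma 3.2 there), adapting it to the $\iota_K$-complex context. Suppose for contradiction that $g|_{\im f}$ is not injective, so there is an element $x \notin \ker f$ with $g(f(x)) = 0$; equivalently, $\ker f \subsetneq \ker(g \circ f)$. The idea is to use this failure of injectivity to build a self-local equivalence $h$ with $\ker f \subsetneq \ker h$, contradicting the maximality of $f$.

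First I would observe that $g \circ f \co \CFK_{\F[\U, \V]}(K) \to \CFK_{\F[\U, \V]}(K)$ is a composition of (almost) local maps and hence is itself a (almost) local map: it is a bigraded chain map, it (skew-)commutes with $\iota_K$ up to the appropriate homotopy, and it induces an isomorphism on $H_*((\U, \V)^{-1} C)$ since both $f$ and $g$ do. Since $f$ is a \emph{self-local equivalence} (not merely a local map) it is moreover a local \emph{equivalence}, so there is a local map $f' $ going the other way with $f' f \simeq \id$ and $f f' \simeq \id$; this is what will let me promote $g \circ f$ into something comparable to $f$. The key point is that $g \circ f$ is a self-local equivalence of $(C, \iota)$, and by construction $\ker f \subseteq \ker (g \circ f)$, so $f \lesssim g \circ f$. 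Maximality of $f$ then forces $\ker f = \ker(g \circ f)$. But $\ker f = \ker(g\circ f)$ together with $g\circ f$ being a chain map that agrees with an isomorphism after localization implies that $g\circ f$ restricted to $\im f$ is injective: if $g(f(x)) = 0$ then $x \in \ker(g\circ f) = \ker f$, so $f(x) = 0$. This is exactly the claim, since elements of $\im f$ are precisely the $f(x)$.

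So the structure is: (1) check $g\circ f$ is a (almost) self-local equivalence, using that local maps compose and that $f$ being a local \emph{equivalence} guarantees the two-sided inverse needed to see $g \circ f$ lands in the right class; (2) note $\ker f \subseteq \ker(g \circ f)$ trivially; (3) invoke maximality to upgrade this to equality; (4) unwind the equality $\ker f = \ker(g \circ f)$ to conclude $g|_{\im f}$ is injective. In the almost case one simply carries the ``mod $(\U, \V)$'' qualifiers through each step; nothing changes structurally because compositions of chain maps reducing to isomorphisms mod localization still reduce to isomorphisms, and Lemma~\ref{lem:homotopyreduced} ensures the skew-commutation hypotheses behave well mod $(\U,\V)$.

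The main obstacle I anticipate is step (1): verifying that $g \circ f$ really is a self-local equivalence and not just a local map, which requires knowing that $f$ admits a local-map inverse $f'$ with $f'f \simeq \id$. This is automatic from the definition of self-local \emph{equivalence} (as opposed to self-local \emph{map}), but one must be slightly careful that $g\circ f$ composed with $f'$ on the appropriate side still gives something in the class of self-local equivalences — i.e. that "self-local equivalence" is closed under the relevant operations. A secondary subtlety is purely bookkeeping: making sure the skew-equivariance/skew-homotopy conditions ($f\iota_1 \skewsimeq \iota_2 f$ etc.) compose correctly, i.e. that a composition of a skew-commuting map with a skew-commuting map commutes (not skew-commutes) with $\iota$, which is the reason $g\circ f$ — rather than $f$ or $g$ alone — is the right object to feed into the maximality argument. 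Everything else is formal.
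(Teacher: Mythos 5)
Your argument is correct and is essentially the paper's own proof: since $\ker f \subseteq \ker(g \circ f)$ and $g \circ f$ is again a (almost) self-local equivalence of $(C,\iota)$, maximality of $f$ forces $\ker f = \ker(g \circ f)$, which is precisely injectivity of $g|_{\im f}$. The only superfluous point is your step (1) worry about a homotopy inverse $f'$ with $f'f \simeq \id$: in this paper a self-local equivalence is by definition just a (almost) local map from the complex to itself, so $g \circ f$ qualifies immediately as a composition of (almost) local maps, and no inverse is needed.
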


\begin{proof}
Note that since $ \ker f \subseteq \ker (g f)$, we have that $f \lesssim gf$. Since $f$ is maximal, this implies that $\ker f = \ker (gf)$. Therefore, $g|_{\im f}$ is injective.
\end{proof}

\begin{lemma}
Let $f$ be a maximal (almost) self-local equivalence of $(\CFK_{\F[\U, \V]}(K), \iota_K)$. The isomorphism type of $\im f$ is independent of $f$.
\end{lemma}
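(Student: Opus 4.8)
The plan is to show that any two maximal self-local equivalences $f,g$ have isomorphic images by constructing explicit maps between $\im f$ and $\im g$ and using Lemma~\ref{lem:inj} to argue they are inverse isomorphisms up to a finiteness/rank argument. First I would observe that $g$ restricted to $\im f$ is injective (by Lemma~\ref{lem:inj}, applied with $g$ a self-local map), and symmetrically $f$ restricted to $\im g$ is injective. Thus $g|_{\im f} \co \im f \to \im g$ is an injective $\F[\U,\V]$-module map (it lands in $\im g$ since $g$ has image $\im g$), and likewise $f|_{\im g}\co \im g \to \im f$ is injective. So we have injections both ways between $\im f$ and $\im g$.

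Next I would upgrade these mutual injections to an isomorphism. The composite $f g|_{\im f}\co \im f \to \im f$ is injective, and one checks it is also surjective onto $\im f$: indeed $fg$ is itself a self-local equivalence, so by Lemma~\ref{lem:inj} (or rather its proof, which shows $\ker f = \ker(fg)$ using maximality of $f$) we get that $fg$ has the same image as $f$, namely $\im f$; combined with injectivity on $\im f$ this makes $fg|_{\im f}$ an automorphism of $\im f$. Hence $g|_{\im f}$ is a split injection with a left inverse, and since $f|_{\im g}\circ g|_{\im f}$ is an isomorphism while $g|_{\im f}$ is injective, a standard diagram chase (or the observation that both modules are finitely generated over the Noetherian ring $\F[\U,\V]$ and we have injections in both directions whose composites are automorphisms) forces $g|_{\im f}\co \im f \xrightarrow{\sim} \im g$ to be an isomorphism. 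One should keep track of gradings: all maps here are graded chain maps of degree $0$, so the isomorphism is a graded isomorphism of chain complexes, and it commutes with $\iota$ up to skew-homotopy because $f,g$ do.

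The main obstacle I expect is the bookkeeping needed to pass from ``mutual injections'' to ``isomorphism''. Over a general ring mutual injections between modules need not be isomorphisms, so one genuinely needs to use maximality: the key point is that $fg$ (and $gf$) are again self-local equivalences, so maximality of $f$ forces $\ker(fg) = \ker f$, hence $fg$ is injective on $\im f$ and has image exactly $\im f$, i.e.\ $fg|_{\im f}$ is an automorphism of $\im f$. From an automorphism $\alpha = f|_{\im g}\circ g|_{\im f}$ of $\im f$ and an automorphism $\beta = g|_{\im f}\circ f|_{\im g}$ of $\im g$, together with the injectivity of $g|_{\im f}$ and $f|_{\im g}$, it follows formally that $g|_{\im f}$ and $f|_{\im g}$ are mutually inverse up to these automorphisms, so $\im f \cong \im g$. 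I would write this last formal step carefully but briefly, as it is the only place where something could go wrong.

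Finally I would remark that the same argument applies verbatim in the ``almost'' setting, replacing chain maps and homotopies by their reductions mod $(\U,\V)$ throughout, since Lemma~\ref{lem:inj} was already stated for both cases.
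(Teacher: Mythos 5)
Your setup matches the paper's: both arguments begin by applying Lemma~\ref{lem:inj} twice to conclude that $g|_{\im f}\co \im f \to \im g$ and $f|_{\im g}\co \im g \to \im f$ are injective. The gap is in how you pass from these mutual injections to an isomorphism. Your key claim is that $fg|_{\im f}$ is surjective onto $\im f$ because ``$\ker f = \ker(fg)$, so $fg$ has the same image as $f$.'' This does not follow: equality of kernels only gives an abstract isomorphism $\im(fg)\cong C/\ker(fg)$, not equality of the submodules $\im(fg)$ and $\im f$ inside $C$ (compare the inclusion $\U\cdot\F[\U,\V]\subset \F[\U,\V]$, which is injective but not surjective). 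Moreover, the kernel identity you invoke is the wrong one: maximality of $f$ (via $f\lesssim gf$) gives $\ker f=\ker(gf)$, while what maximality of $g$ gives is $\ker g=\ker(fg)$; neither yields $\im(fg)=\im f$. Your fallback --- that mutual injections between finitely generated modules over the Noetherian ring $\F[\U,\V]$ whose composites are automorphisms force an isomorphism --- presupposes exactly the automorphism statement in question, and mutual injections alone are not enough over $\F[\U,\V]$: the ideal $(\U,\V)$ and the ring $\F[\U,\V]$ inject into each other but are not isomorphic.

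The missing ingredient, which the paper's one-line conclusion implicitly uses (following the connected-homology argument of \cite{HHL}), is the bigrading: $C$ is a finitely generated free bigraded complex over $\F[\U,\V]$, so every bigraded piece of $\im f$ and of $\im g$ is a finite-dimensional $\F$-vector space, and all the maps involved are bigraded of degree $(0,0)$. The mutual injections then force the graded $\F$-dimensions of $\im f$ and $\im g$ to agree in each bigrading, so $g|_{\im f}$ is bijective in every bigrading and hence an isomorphism of bigraded complexes. This also repairs your automorphism step --- an injective degree-$(0,0)$ endomorphism of a bigraded module with finite-dimensional pieces is automatically surjective --- but once the dimension count is in place, the detour through $fg|_{\im f}$ is unnecessary.
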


\begin{proof}
Let $f$ and $g$ be maximal (almost) self-local equivalences of $(\CFK_{\F[\U, \V]}(K), \iota_K)$. We will show that $\im f \cong \im g$. By Lemma \ref{lem:inj}, we see that $f|_{\im g}$ and $g|_{\im f}$ are injective. Since we have injective, bigraded, $\F[\U, \V]$-equivariant chain maps between $\im f$ and $\im g$, it follows that $\im f$ and $\im g$ are isomorphic, since all chain complexes involved are finitely generated over $\F[\U, \V]$.
\end{proof}

The above lemma ensures the following is well-defined:

\begin{definition}
The \emph{$\iota_K$-connected knot Floer complex of $K$}, denoted $\CFK_{\ikconn}(K)$, is the image of any maximal self-local equivalence of the $\iota_K$-complex of $K$. The \emph{almost $\iota_K$-connected knot Floer complex of $K$} is the image of any maximal almost self-local equivalence of the  almost $\iota_K$-complex of $K$. Let
\[ \HFKmconn := H_*(\CFK_{\ikconn}(K)/(\V)). \]
\end{definition}

The next proposition underscores the utility of the connected complex in the study of concordance:

\begin{proposition}\label{prop:ikconn}
The (almost) $\iota_K$-connected knot Floer complex of $K$ is an invariant of the (almost) local equivalence class of $(\CFK_{\F[\U, \V]}(K), \iota_K)$. In particular, the (almost) $\iota_K$-connected knot Floer complex of $K$ is a subcomplex of $\CFK_{\F[\U,\V]}(K')$ for any knot $K'$ concordant to $K$.
\end{proposition}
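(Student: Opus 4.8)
The plan is to show that $\CFK_{\ikconn}(K)$ depends only on the local equivalence class, and then deduce the ``subcomplex'' assertion by combining this with the fact that a concordance produces local maps in both directions.

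First I would prove invariance. Suppose $(\CFK_{\F[\U,\V]}(K),\iota_K)$ and $(\CFK_{\F[\U,\V]}(K'),\iota_{K'})$ are locally equivalent, via local maps $\phi \co \CFK_{\F[\U,\V]}(K) \to \CFK_{\F[\U,\V]}(K')$ and $\psi \co \CFK_{\F[\U,\V]}(K') \to \CFK_{\F[\U,\V]}(K)$. Let $f$ be a maximal self-local equivalence of $(\CFK_{\F[\U,\V]}(K),\iota_K)$. I would first check that $\psi \circ (\text{maximal self-local equivalence of }K') \circ \phi$ is again a self-local equivalence of $\CFK_{\F[\U,\V]}(K)$, using that a composite of local maps is local (each factor induces an isomorphism on $H_*((\U,\V)^{-1}C)$, and the commutation with $\iota$ up to skew-homotopy is preserved under composition). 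The key observation is the analogue of Lemma~\ref{lem:inj}: if $f$ is maximal and $g$ is any self-local equivalence then $\ker f = \ker(g f)$, hence $g|_{\im f}$ is injective; applying this with $g = \psi \circ f' \circ \phi$ (where $f'$ is a maximal self-local equivalence of $K'$) shows that the restriction $\phi|_{\im f}$ is injective. By the same argument with the roles of $K$ and $K'$ reversed, $\psi|_{\im f'}$ is injective. One then has injective bigraded $\F[\U,\V]$-equivariant chain maps between $\CFK_{\ikconn}(K) = \im f$ and $\CFK_{\ikconn}(K') = \im f'$ in both directions, so (as in the proof of the lemma preceding the definition) $\im f \cong \im f'$. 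The almost case is formally identical, working mod $(\U,\V)$ throughout.

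For the second assertion: if $K'$ is concordant to $K$, then by Zemke's theorem the concordance induces local maps $\phi \co \CFK_{\F[\U,\V]}(K) \to \CFK_{\F[\U,\V]}(K')$ and $\psi$ in the reverse direction, so $(\CFK_{\F[\U,\V]}(K),\iota_K)$ and $(\CFK_{\F[\U,\V]}(K'),\iota_{K'})$ are locally equivalent. Let $f'$ be a maximal self-local equivalence of $(\CFK_{\F[\U,\V]}(K'),\iota_{K'})$, so $\CFK_{\ikconn}(K') = \im f'$ is literally a subcomplex of $\CFK_{\F[\U,\V]}(K')$ by definition. By the invariance just proved, $\CFK_{\ikconn}(K) \cong \CFK_{\ikconn}(K')$ as bigraded $\F[\U,\V]$-equivariant chain complexes, so $\CFK_{\ikconn}(K)$ is (isomorphic to) a subcomplex of $\CFK_{\F[\U,\V]}(K')$, as claimed. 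Again the almost version is identical, invoking Lemma~\ref{lem:concordantalmostequiv} in place of Zemke's theorem.

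The main obstacle is the first step: verifying carefully that ``maximality'' transports across a local equivalence, i.e. that one can meaningfully compare self-local equivalences of two different complexes. The clean way around this is to avoid transporting maximality directly and instead argue purely through injectivity of restrictions, exactly as in the proof of the lemma showing $\im f$ is well-defined—the point being that Lemma~\ref{lem:inj} already gives injectivity of $g|_{\im f}$ for \emph{any} self-local equivalence $g$, and $\psi f' \phi$ is such a $g$. One should also double-check that $\im f$ is genuinely a chain complex (a subcomplex of $\CFK_{\F[\U,\V]}(K)$), which follows since $f$ is a chain map, and that the localization condition (1) in Definition~\ref{def:iotaK} is inherited by $\im f$ because $f$ is a local equivalence; these are routine but worth stating.
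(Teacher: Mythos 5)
Your proposal is correct and follows essentially the same route as the paper: compose the local maps with a maximal self-local equivalence of the other complex, apply Lemma~\ref{lem:inj} via maximality, and conclude from injections in both directions that the images are isomorphic (the subcomplex statement then being immediate from Zemke's theorem, respectively Lemma~\ref{lem:concordantalmostequiv}). One small correction: from the injectivity of $(\psi f' \phi)|_{\im f}$ you should retain the injectivity of $(f'\phi)|_{\im f}$, not merely of $\phi|_{\im f}$, since it is $f'\phi$ (and symmetrically $f\psi$) that actually lands in $\im f'$ (resp.\ $\im f$) and hence furnishes the two injective bigraded $\F[\U,\V]$-equivariant chain maps between the connected complexes.
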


\begin{proof}
This is straightforward from the definitions. Indeed, let $f \co C_1 \to C_1$ and $g \co C_2 \to C_2$ be maximal (almost) self-local equivalences, where $C_1$ and $C_2$ are finitely generated over $\F[\U,\V]$. If $(C_1, \iota_1)$ and $(C_2, \iota_2)$ are (almost) locally equivalent via $h \co C_1 \to C_2$ and $k \co C_2 \to C_1$, then maximality of $f$ implies that $(kgh)|_{\im f}$ is injective, hence $gh|_{\im f}$ is injective. Similarly, $fk|_{\im g}$ is injective. Hence $\im f$ and $\im g$ are isomorphic.
\end{proof}

We now relate the $\iota_K$-connected knot Floer complex to the concordance unknotting number. The main idea is identical to  \cite[Theorem 1.14]{DHSTmore}. Let $M$ be a finitely generated $\F[\U]$-module and let $\Tor M$ be the $\F[\U]$-torsion submodule of $M$. Define the \emph{torsion order} of $M$ to be
\[ \Ord_\U (M) = \min \{ n \in \N \mid \U^n (\Tor M) =0 \}, \]
and the \emph{torsion order of $K$} to be
\[ \Ord_\U (K) = \min \{ n \in \N \mid \U^n (\Tor \HFKm(K) ) =0 \}. \]


\noindent Recall from \cite[Theorem 1.1]{AlishahiEftekharyunknotting} (see also \cite{Zemke-funct}) that
\begin{equation}\label{eq:unknottingord}
	u(K) \geq \Ord_\U (K).
\end{equation}

\begin{corollary}\label{cor:torsionorder}
The torsion order of $\HFKmconn(K)$ provides the following bound on concordance unknotting number:
\[ u_c(K) \geq \Ord_\U(\HFKmconn(K)). \]
\end{corollary}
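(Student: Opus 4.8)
The plan is to combine the lower bound on the ordinary unknotting number from \eqref{eq:unknottingord} with the concordance-invariance of the connected complex established in Proposition~\ref{prop:ikconn}. First I would observe that $u_c(K)$ is realized by some knot $K'$ concordant to $K$ with $u(K') = u_c(K)$, so by \eqref{eq:unknottingord} it suffices to show $u(K') \geq \Ord_\U(\HFKmconn(K))$, and hence it suffices to prove
\[ \Ord_\U(\HFKm(K')) \geq \Ord_\U(\HFKmconn(K)). \]
In other words, the entire content is to show that the torsion order of $\HFKmconn(K)$ is a lower bound for the torsion order of $\HFKm$ of any knot in the concordance class, i.e.\ that $\Ord_\U \circ \HFKmconn$ is monotone (downward) under passing from $K'$ to the concordance invariant connected complex.

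The key step is the inclusion of complexes. By Proposition~\ref{prop:ikconn}, since $K$ and $K'$ are concordant, $\CFK_{\ikconn}(K)$ is (isomorphic to) a subcomplex of $\CFK_{\F[\U,\V]}(K')$ — more precisely, it is the image of a maximal self-local equivalence of the $\iota_K$-complex of $K'$, and there is an injective, bigraded, $\F[\U,\V]$-equivariant chain map $\CFK_{\ikconn}(K) \hookrightarrow \CFK_{\F[\U,\V]}(K')$. Quotienting by $\V$ and passing to homology, I would argue that this induces a map $\HFKmconn(K) \to \HFKm(K')$ whose behavior on torsion I can control: because the inclusion splits as $\F[\U,\V]$-modules (the connected complex is a direct summand as a chain complex, as it is the image of an idempotent-like maximal self-local equivalence up to homotopy — or more simply, one can choose a basis adapted to the summand), the induced map $\HFKmconn(K) \to \HFKm(K')$ realizes $\Tor\HFKmconn(K)$ inside $\Tor\HFKm(K')$, so $\U^n$ killing the latter forces $\U^n$ to kill the former. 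This gives $\Ord_\U(\HFKmconn(K)) \leq \Ord_\U(\HFKm(K')) = \Ord_\U(K')$.

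Stringing these together: $u_c(K) = u(K') \geq \Ord_\U(K') = \Ord_\U(\HFKm(K')) \geq \Ord_\U(\HFKmconn(K))$, which is the desired inequality.

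The main obstacle I anticipate is the homological-algebra bookkeeping in the middle step: one must be careful that a subcomplex inclusion at the level of $\CFK_{\F[\U,\V]}$ descends correctly to $\HFKm = H_*(\,\cdot\,/(\V))$ and interacts well with $\U$-torsion. The cleanest route is to note that $\CFK_{\ikconn}(K)$ being the image of a maximal self-local equivalence means it is a \emph{summand} (as a bigraded $\F[\U,\V]$-complex, up to homotopy) of $\CFK_{\F[\U,\V]}(K')$; then $/(\V)$ and $H_*$ both respect direct sums, so $\HFKm(K') \cong \HFKmconn(K) \oplus (\text{something})$ as $\F[\U]$-modules, and $\Tor$ distributes over the direct sum, making the torsion-order inequality immediate. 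Verifying that "image of a maximal self-local equivalence" really yields a genuine summand (rather than merely a subcomplex) may require invoking that over $\F[\U,\V]$ a maximal self-local equivalence can be taken, up to homotopy, to be a projection onto its image — this is the one place where the argument of \cite{HHL} (cf.\ the discussion preceding this corollary) needs to be cited carefully; alternatively one argues directly with the injectivity statement from Lemma~\ref{lem:inj} and Proposition~\ref{prop:ikconn} and a short diagram chase on torsion submodules, avoiding the summand claim entirely.
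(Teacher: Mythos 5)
Your proposal is essentially the paper's own proof: the paper disposes of the corollary in one line by combining \eqref{eq:unknottingord} with Proposition~\ref{prop:ikconn}, exactly the two ingredients you use, and merely leaves implicit the bookkeeping you spell out (picking $K'$ realizing $u_c(K)$ and comparing torsion orders through the inclusion $\CFK_{\ikconn}(K)\subseteq \CFK_{\F[\U,\V]}(K')$). The one point you rightly flag---that the inclusion admits a retraction, so torsion classes of $\HFKmconn(K)$ genuinely inject into $\Tor\HFKm(K')$---is sound and is resolved as you suggest: a maximal self-local equivalence $g$ restricts to an injective, hence (by finite-dimensionality of each bigraded piece) bijective, endomorphism of $\im g$, so $(g|_{\im g})^{-1}\circ g$ is the desired projection, the same mechanism already used in the proof of Proposition~\ref{prop:ikconn} and in \cite{HHL}.
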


\begin{proof}
The result follows from  \eqref{eq:unknottingord} and Proposition \ref{prop:ikconn}.
\end{proof}

%
%
%
%

Lastly, we need the following proposition:
\begin{proposition}\label{prop:HFKconn}
Let $K_n$ denote the $(2n-1, -1)$-cable of the figure-eight knot for $n \geq 2$. Then 
$$\U^{n-1} \cdot \HFKmconn(K_n) \neq 0.$$
\end{proposition}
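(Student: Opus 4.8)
The plan is to compute (enough of) the almost $\iota_K$-connected complex of $K_n$ directly from the partial computation of $\iota_K$ in Lemma \ref{lem:iotaKcable}, and then exhibit a nonzero $\U^{n-1}$-torsion class in $\HFKm$ of the connected complex. First I would recall that $\CFK_{\ikconn}(K_n)$ is the image of a maximal (almost) self-local equivalence $h$ of $(\CFK_{\F[\U,\V]}(K_n),\iota_K)$; any self-local equivalence is injective on the ``tower'' part, so $h$ must be nonzero on the distinguished cycle $a$ (which generates $H_*((\U,\V)^{-1}C)\cong\F[\U,\V]$), and by the grading argument in the proof of Lemma \ref{lem:iotaKcable} we may normalize so that $h(a)=a \bmod (\U,\V)$. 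The key point is then to identify the summand of the almost $\iota_K$-complex containing $b$, $c$, $f$ (the ``central'' figure-eight summand with cabling parameter giving $\d b = \U^n c + \U\V d + \V^n e$, $\d c = \V f$), together with the generator $a$, and to argue that no self-local equivalence can collapse this piece.

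The main steps would be: (1) Observe, using Lemma \ref{lem:iotaKcable}, that on the subquotient spanned by $a,b$ (mod $(\U,\V)$) the map $\iota_K$ acts by $a\mapsto a$, $b\mapsto a+b$, so this $2$-dimensional piece is indecomposable as an almost $\iota_K$-complex and cannot be split off; in particular any self-local equivalence $h$ must be injective on the span of $a$ and $b$ in an appropriate sense, so the class of $b$ survives in $\CFK_{\ikconn}(K_n)$ (up to elements of $(\U,\V)$ and up to the homotopies from the proof of Lemma \ref{lem:iotaKcable}). (2) Having retained a generator $b'$ with $h(b')=b \bmod(\U,\V)$ and $\d b' = \U^n c' + (\text{terms})$ with $c'$ mapping to $c$, pass to $\HFKm = H_*(\CFK_{\ikconn}(K_n)/(\V))$: setting $\V=0$ kills the $\U\V d$ and $\V^n e$ terms, so $\d b' = \U^n c'$ and $\d c' = 0$, while $f' $ (mapping to $f$) is a cycle with $\d c' = \V f' \equiv 0$. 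Then $c'$ generates an $\F[\U]/(\U^n)$ summand of $\HFKm(\CFK_{\ikconn}(K_n))$ — more precisely $\U^{n-1}c' \neq 0$ but $\U^n c' = \d b' = 0$ in homology, and $\U^{n-1} c'$ is not a boundary since the only incoming differential hitting (a multiple of) $c'$ is $\d b' = \U^n c'$, which has strictly higher $\U$-power. (3) Conclude $\U^{n-1}\cdot \HFKmconn(K_n)\neq 0$.

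The subtle point — and the step I expect to be the main obstacle — is (1): carefully justifying that the relevant piece of $\CFK_{\F[\U,\V]}(K_n)$ genuinely survives into the connected complex, i.e.\ that a maximal self-local equivalence cannot have the classes of $b$ and $c$ in its kernel. The argument should parallel the proof of Theorem \ref{thm:main}: if $h$ killed $c$ (equivalently, factored through a quotient in which $\U^{n-1}c$ dies), one would need a self-local map whose kernel is strictly larger, but the same chain-map obstruction used in that proof — that $\d b = \U^n c + \U\V d + \V^n e$ forces any map nonzero on $a$ to be nonzero on a $c$-type generator modulo $(\U^n,\V^n,\U\V)$ — shows this is impossible, contradicting maximality via Lemma \ref{lem:inj}. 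One must also check that none of the \emph{other} summands of $\CFK_{\F[\U,\V]}(K_n)$ listed in Proposition \ref{prop:CFKcable} can be used to ``replace'' the central summand with one carrying less $\U$-torsion; here the point is that all the $b$-type generators lie in grading $(0,0)$ and the relevant torsion orders $n\pm i$ are at least $2$, but only the central summand is forced to persist by its interaction with $a$ through $\iota_K$ (Lemma \ref{lem:iotaKcable}), so the connected complex retains at least an $\F[\U]/(\U^n)$-summand in $\HFKm$, which is all that is needed.
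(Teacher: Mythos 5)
Your overall strategy is the same as the paper's: use Lemma \ref{lem:iotaKcable} to show that no self-local equivalence of $(\CFK_{\F[\U,\V]}(K_n),\iota_K)$ can collapse the central square of Proposition \ref{prop:CFKcable}, and then extract the $\U^{n-1}$-torsion from the fact that the only arrow into a $\U$-power of $c$ is the length-$n$ horizontal arrow from $b$; your divisibility argument in step (2) is exactly the paper's closing step. The soft spot is step (1), and your stated justification there is weaker than what the argument actually requires. It is not enough that $h$ is ``injective on the span of $a$ and $b$'' or that this two-dimensional piece is ``indecomposable'': injectivity on $\mathrm{span}(a,b)$ is compatible with, say, $h(b)=a+b_{1,k} \bmod (\U,\V)$, and since $\d b_{1,k}$ contains $\U^{n-k}c_{1,k}$ the torsion you would then extract is only of order $n-k<n$, so your conclusion would fail. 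Your remark that ``the relevant torsion orders $n\pm i$ are at least $2$'' does not repair this, because the $b_{1,k}$-summands have torsion order strictly less than $n$ and would genuinely spoil the bound if they could stand in for $b$.

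What is needed, and what the paper proves, is the coefficient statement $\langle h(b),b\rangle=1$ for an arbitrary self-local equivalence $h$: from $\omega h(b)=h\omega(b)=h(a)=a \bmod (\U,\V)$ one sees that $h(b)$ contains some grading-$(0,0)$ generator other than $a$, and from $\d h(b)=h(\d b)=0 \bmod (\U^n,\U\V,\V^n)$ one excludes every $b_{0,j}$ and $b_{1,k}$, since each of their differentials contains a term ($\V^{n-j}e_{0,j}$, $\V e_{0,n-1}$, $\U^{n-k}c_{1,k}$, or $\U c_{1,n-1}$) that survives modulo $(\U^n,\U\V,\V^n)$ and cannot be cancelled. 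You gesture at this obstruction in your final paragraph (``forces any map nonzero on $a$ to be nonzero on a $c$-type generator''), but the statement you need is the sharper one just given, and it is the chain-map constraint, not indecomposability, that does the work. You must then propagate it one step further: since $b$ is the unique generator whose differential hits $c$, the equality $\langle h(b),b\rangle=1$ gives $\langle \d h(b),c\rangle=\U^n$ and hence, by equivariance, $\langle h(c),c\rangle=1$ (your phrase ``with $c'$ mapping to $c$'' hides exactly this step); only then does the divisibility argument show $\U^{n-1}[h(c)]\neq 0$ in $H_*(\im h/(\V))$. Finally, the detour through maximality and Lemma \ref{lem:inj} is unnecessary: the argument applies verbatim to any self-local equivalence, in particular to a maximal one, which is how the paper phrases it.
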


%
Before proving Proposition \ref{prop:HFKconn}, we show how it implies Corollary~\ref{cor:concordanceunknotting}.
%

\begin{proof}[Proof of Corollary~\ref{cor:concordanceunknotting}]
Our goal is to show that $u_c(K_n) \geq n$. Proposition \ref{prop:HFKconn} implies that 
\[ \Ord_\U (\HFKmconn(K_n)) \geq n. \]
The result now follows from Corollary \ref{cor:torsionorder}.
\end{proof}

We now prove Proposition \ref{prop:HFKconn}.

\begin{proof}[Proof of Proposition~\ref{prop:HFKconn}]
Let $C_n$ denote the almost $\iota_K$-complex of $K_n$. Let
\[ f \co C_n \to C_n \]
be a maximal almost self-local equivalence of $C_n$. As in the proof of Theorem \ref{thm:main}, we will write the involution $\iota_{K_n}$ on $C_n$ as $\iota_K$.

Recall from Lemma \ref{lem:iotaKcable} that with notation as in Proposition \ref{prop:CFKcable}, we have
\begin{align*}
	\iota_K(a) & = a \mod (\U, \V) \\
	\iota_K(b) & = b+a \mod (\U, \V). 	
\end{align*}

Since $f$ is an almost local map, $C_n$ is reduced, and the cycle $a$ generates the homology of $(\U,\V)^{-1}C_n$, we know that $f(a)\ne 0 \mod (\U,\V)$. But $a$ is the only nontrivial cycle mod $(\U,\V)$ in the $(0,0)$-graded piece of $C_n$, so we have that $f(a) = a \mod (\U, \V)$. Let $\omega = 1 +\iota_K$. Then $\omega(b) = a \mod (\U, \V)$, and it follows that
\begin{align}
	\omega f(b) &= f \omega(b) \mod (\U, \V) \nonumber\\
		&= f(a) \mod (\U, \V) \nonumber\\
		&=a \mod (\U, \V). \label{eq:omegafb=a}
\end{align}
In particular, $f(b) \neq 0 \mod (\U, \V)$.

We also have that $\d b = 0 \mod (\U^n, \U \V, \V^n)$. Hence
\begin{equation}\label{eq:dfb}
	\d f(b) = 0  \mod (\U^n, \U \V, \V^n).
\end{equation}

We claim that
\begin{equation}
	\langle f(b), b \rangle = 1, \label{eq:fbb}
\end{equation}
where $\langle f(b), b \rangle$ denotes the coefficient of $b$ when $f(b)$ is expressed in terms of the basis in Proposition \ref{prop:CFKcable}. To see why, observe that $f(b)$ lies on the bigrading $(0,0)$, so if the claim is false, then we can write it as a linear combination $f(b)=\epsilon _a a+\sum_{i=1}^{n-1} (\epsilon _{0,i}b_{0,i}+\epsilon_{1,i} b_{1,i}) + z$ for some coefficients $\epsilon _a,\epsilon _{0,i},\epsilon_{1,i}\in \mathbb{F}$ and some $z$ contained in the $\mathbb{F}$-linear span of 
\[ U^{n+i-1}f_{0,i},U^{n-i-1}f_{1,i},V^{n-i-1}g_{0,i},V^{n+i-1}g_{1,i}. \]
Since $\d z=0$, \eqref{eq:dfb} gives $0=\d f(b)=\sum_{i=1}^{n-1}(\U^{n-i}\epsilon_{0,i}e_{0,i} +V^{n-i} \epsilon_{1,i}c_{1,i}) \mod (\U^n,\U\V,\V^n)$. Hence $\epsilon_{0,i}=\epsilon_{1,i}=0$ for all $i$, i.e. $f(b)=0\text{ or }a \mod (\U,\V)$. But then we get $\omega f(b)=0 \mod (\U,\V)$, which contradicts \eqref{eq:omegafb=a}.

By inspection of the differential in Proposition \ref{prop:CFKcable} (namely, that the unique arrow coming into any $\U$ or $\V$ power of $c$ is a length $n$ horizontal arrow from $b$), we see that \eqref{eq:fbb} implies that
\[ \langle \d f(b), c \rangle = \U^n \]
or equivalently
\[ \langle f \d(b), c \rangle = \U^n. \]
Hence 
\[ \langle f (\U^n c + \U \V d + \V^n e), c \rangle = \U^n. \]
By $\F[\U, \V]$-equivariance of $f$, we see that
\[ \langle f(c), c \rangle = 1. \]
We now claim that $\U^{n-1} \cdot [f(c)] \neq 0 \in \HFKmconn(K_n)=\im f$. Since $c$ is a cycle in $C_n/(\V)$, so is $f(c)$. Let $\d_\U$ denote the induced differential on $C_n/(\V)$. Since $\langle f(c), c \rangle = 1$ and the unique horizontal arrow coming into any $\U$ power of $c$ is of length $n$, it follows that $\U^{n-1} f(c) \mod (\V)$ is not in the image of $\d_\U$. Hence $\U^{n-1} \cdot [f(c)] \neq 0 \in \HFKmconn(K_n)$ as desired.

Since $f$ was a maximal almost self-local equivalence, it follows that $\U^{n-1} \cdot \HFKmconn(K_n) \neq 0$, completing the proof.\end{proof}


\bibliographystyle{alpha}
\bibliography{bib}

\end{document}